\documentclass[12pt]{article} %draft

\usepackage{amsmath, amsfonts, amssymb}
\usepackage{mathrsfs}
\usepackage{theorem}
\usepackage{bm}

\RequirePackage[colorlinks,citecolor=blue,urlcolor=blue,bookmarksopen]{hyperref}

%--------length----------------------------------------------
\pagestyle{plain} \setlength{\textheight}{20cm}
\setlength{\textwidth}{16cm} \setlength{\parindent}{2em}
\setlength{\oddsidemargin}{0cm} \setlength{\evensidemargin}{0cm}
\setlength{\parskip}{1.5ex plus 0.5ex minus 0.5ex}

%\newcommand{\citeu}[1]{$^{\mbox{\protect \scriptsize \cite{#1}}}$}
%\newcommand{\rf}[2]{[\ref{#1}; #2]}
%------------theorem-----------------------------------------
\newtheorem{mythm}{Theorem}[section]

\newtheorem{mylem}[mythm]{Lemma}

\newtheorem{mydefn}[mythm]{Definition}

{\newtheorem{myrem}[mythm]{Remark}}
{}%\theorembodyfont{\rmfamily}
%-----------definition of symbol------------------------------
%\DeclareMathOperator*{\esssup}{ess\,sup}
%\DeclareMathOperator{\rd}{d}

\newcommand{\ra}{\rightarrow}
\newcommand{\dis}{\displaystyle}

\def\R{\mathbb R}

\def\N{\mathbb N}
\def\C{\mathscr C}

\def\F{\mathscr F}
\def\d{\text{\rm{d}}}
\def\E{\mathbb E}
\def\p{\mathbb P}
\def\e{\text{\rm{e}}}

\def\La{\Lambda}
\def\veps{\varepsilon}

\def\ll{\mathscr{L}}

\newcommand{\D}{\mathscr D}
\def\S{\mathcal S}
\def\C{\mathscr C}
\def\vsig{\varsigma}
\def\pb{\mathscr{P}}

\def\wt{\widetilde}

%-----------definition of environment-------------------------
\newcommand{\fin}{\hfill $\square$\par}%{\hspace*{\fill}\rule{0.3em}{1ex}}
\newenvironment{proof}{{\noindent\it Proof.}\ }{\hfill $\square$\par}
%-----------definition of number -----------------------------
%\newcounter{secequation}[section]
%\setcounter{secequation}{1}

\numberwithin{equation}{section}

\allowdisplaybreaks

\begin{document}

\title{The existence of optimal feedback controls for stochastic dynamical systems with regime-switching\thanks{Supported in
 part by NNSFs of China (Nos. 11771327,  11831014)}}

\author{Jinghai Shao\thanks{Center for Applied Mathematics, Tianjin University, Tianjin 300072, China. (Email: shaojh@tju.edu.cn.) } }

\maketitle

\begin{abstract}
In this work we provide explicit conditions on the existence of optimal feedback controls for stochastic processes with regime-switching. We use the compactification method which needs less regularity conditions on the coefficients of the studied stochastic systems. Two different kinds of controls have been considered: one is the control on the coefficients of the diffusion processes, another is the control on the transition rate matrices of the continuous-time Markov chains. Moreover, the dynamic programming principle is established after showing the continuity of the value function.
\end{abstract}

\noindent AMS subject Classification (2010):\ 93B52, 60J60, 49K30, 60J27

\noindent \textbf{Keywords}: Feedback control, Regime-switching diffusions, Hybrid system, Wasserstein distance

\section{Introduction}

This work focuses on providing sufficient conditions for the existence of optimal feedback controls for the stochastic control systems with regime-switching. This system contains two components $(X_t,\La_t)$: the continuous component $(X_t)$ satisfies a stochastic differential equation (SDE) which describes the evolution of the studied dynamical system; the discrete component $(\La_t)$ is a jumping process on a finite state space which describes the random change of the environment in which $(X_t)$ lives. The control policy also owns two terms: one is to control the coefficients of SDEs; another is to control the transition rate matrices of $(\La_t)$. This kind of controls is of great meaning in applications and has not been investigated before.  All admissible control policies considered in this paper are in the form of feedback control. We develop the compactification method to provide explicit conditions to guarantee the existence of optimal feedback controls with respect to finite-horizon cost functions.  The value function is shown to be continuous and  the dynamic programming principle is established.

%
%It is well known that
%the information available to the controller plays an important role
%in the study of stochastic control theory. For a stochastic system,
%when the controller has no information during the system operation, the control policy is chosen to be adapted to a predetermined stochastic field. Then it is associated with the stochastic open loop problem to study the optimal controls. However, when the controller knows the state of the system at each instant, the control policy can be chosen according to the observations of the system. This forms a closed loop, and it makes the control system more complicated.

The existence of optimal feedback controls is a fundamental issue in the study of control theory. This issue is not only theoretical, since it is needed to ensure that the optimization problem is well defined and to allow subsequent analysis of the equations for the value function.

One approach to establish the existence of optimal controls is based on the theory of partial differential equations of dynamic programming; see the early works of Davis \cite{Dav} and Bismut \cite{Bis}, Fleming and Rishel \cite{FR} or the recent survey % , Kohlmann \cite{Ko81}
Kushner \cite{Ku14} and the references therein.  This method has been extensively studied in connection with the theory of Hamilton-Jacobi-Bellman equations, which encounters the restriction of the regularity of corresponding solutions. Another approach is to show directly the compactness of the minimizing sequence of controls. Kushner \cite{Ku} used the weak convergence of measures to provide a general result on the existence of optimal controls.  Also, Haussmann and Lepeltier \cite{HL90}, Haussmann and Suo
\cite{HS95, HS95b} have developed this method to show the existence of optimal controls and even optimal relaxed controls. The advantage of this compactification method is that it requires less regularity of the value function and thus needs only very mild hypothesis on the data. Especially, the works \cite{Ku, HL90, HS95, HS95b} investigated the stochastic open loop problem.  Moreover, given the existence of an optimal control,  \cite{HL90} used Krylov's Markov selection theorem showed the optimal control could be represented as a Markov control.
It is a far more trivial task to guarantee the limit of the minimizing sequence being adapted to the stochastic fields generated by the dynamic system which  also strongly depends on the limit of the control sequence. In view of this difficulty, the known sufficient conditions on the existence of optimal feedback controls for stochastic control models are mostly provided by the theory of partial differential equations; see, for example, Fleming and Rishel \cite[Chapter VI]{FR} and references therein. A little more precisely, under the help of the verification theorem, once the existence of appropriate solution to certain nonlinear parabolic equation is known, the existence of an optimal control follows from a measurable selection theorem, and the corresponding controlled system is then defined with the aid of the Girsanov formula. Following this approach, under the condition that the diffusion coefficients are independent of the controls, applying the theory of nonlinear parabolic equations, Fleming and Rishel \cite[Theorem 6.3]{FR} presented a result on the existence of the optimal feedback control. However, this method meets an essential difficulty to deal with the system with control-dependent diffusion coefficients, since the optimal feedback controls may be discontinuous.

Besides, Linquist \cite{Lin} transformed the feedback control problem into the stochastic open loop problem for a class of linear systems by adding a further restriction on his feedback class. In this work, we shall develop the compactification method to provide sufficient conditions on the existence of optimal feedback controls.

The stochastic maximum principle plays a central role in stochastic control theory. It gives necessary conditions for optimal controls. Its first version was established by Kushner \cite{Ku72} where the diffusion coefficients are independent of the controls, and  by Peng \cite{Peng} when the diffusion coefficients depend on the controls. Some advance information about the form of the optimal control is needed to use stochastic maximum principles to find optimal controls in applications. For example, L\"u, Wang, and Zhang \cite{LWZ} established the equivalence between the existence of optimal feedback controls for the stochastic linear quadratic control problems and the solvability of the corresponding backward stochastic Riccati equations in some sense. Recently, H. Zhang and X. Zhang \cite{ZZ} investigated the second-order necessary conditions of the optimal controls.

Another purpose of this work is to study the optimal control problem on the transition rates for the dynamic systems living in a random environment, which is characterized via a continuous-time Markov chain, known also as regime-switching processes.
Variations in the external environment (for example, weather or temperature) can have important effects on the dynamics of the studied systems. For instance, for the ecosystem, certain biological parameters such as the growth rates and the carrying capacities often demonstrate abrupt changes due to the environmental noise. Therefore, it is natural to consider the random changes of the environment in mathematical modeling. Recently, such models are widely applied in stochastic control and optimization, mathematical finance, ecological and biological systems, engineer, etc.; see, for example, \cite{BS16, DDY, HS17, M13, YZ} amongst others.
In view of its wide application, this optimal control problems for regime-switching processes have been studied in the literature; see, for instance,
\cite{Gho, SSZ, SZ16, ZSM}, and \cite{ZY03} amongst others. In particular,  \cite{SSZ} and \cite{SZ16} investigated the singular control problem for regime-switching processes with Markovian regime-switching and pointed out the difference in the optimal control problem for the dynamic systems with and without switching. In \cite{SSZ}, Song et al. showed that the value function is a viscosity solution of a system of quasi-variational inequalities (QVIs) through proving first the continuity of the value function by exploiting the advantage of a one-dimensional regime-switching diffusion process. For the Markovian regime-switching processes in high dimensional space, by establishing directly a weakly dynamic programming principle instead of proving the continuity of the value function, Song and Zhu in \cite{SZ16} showed directly that the value function is a viscosity solution to a system of QVIs. However,
there is no discussion on the optimal control problem for state-dependent regime-switching processes which is more complicated due to the intensive interaction between the state process and the switching process. In addition, the result on the control of the transition rate matrix is very limited up to our knowledge.

The regime-switching diffusion processes $(X_t,\La_t)_{t\geq 0}$ contains two components: the first component $(X_t)_{t\geq 0}$ satisfies the following SDE:
\begin{equation}\label{1.1}
\d X_t=b(X_t,\La_t)\d t+\sigma(X_t,\La_t)\d B_t,
\end{equation} where $b:\R^d\times \S\ra \R^d$, $\sigma:\R^d\times \S\ra \R^{d\times d}$, and $(B_t)_{t\geq 0}$ is a standard $d$-dimensional Brownian motion;
the second component $(\La_t)_{t\geq 0}$ is continuous-time jumping process satisfying
\begin{equation}\label{1.2}
\p(\La_{t+\delta}=j|\La_t=i,\ X_t=x)=\begin{cases}
                                        q_{ij}(x)\delta+o(\delta), & \mbox{if $j\neq i$ },  \\
                                        1+q_{ii}(x)\delta+o(\delta), & \mbox{otherwise},
                                      \end{cases}
\end{equation} provided $\delta>0$. The component $(X_t)_{t\geq 0}$ is used to describe the evolution of a dynamical system, and the component $(\La_t)_{t\geq 0}$ is used to reflect the random switching of the environment where the studied system lives. When the transition rate matrix $(q_{ij}(x))$ depends on $x$, $(X_t,\La_t)_{t\geq 0}$ is called a state-dependent regime-switching process. When $(q_{ij}(x))$ does not depend on $x$, then $(\La_t)_{t\geq 0}$ is indeed a continuous-time Markov chain, and is assumed to be independent of the Brownian motion $(B_t)_{t\geq 0}$ as usual. In this case, $(X_t,\La_t)_{t\geq 0}$ is a state-independent regime-switching process, and sometimes called Markovian regime-switching process. Here $\S$ is a denumerable space, and $\mathscr B(\S)$ denotes the collection of all measurable sets. When $\S$ is a finite set, various properties of regime-switching processes such as stability, ergodicity, numerical approximation, etc. have been widely studied in the literature; see, e.g. \cite{M13,MY,YZ,Sh18,XZ,YZ} and references therein. When $\S$ is an infinitely countable set, we refer to \cite{SX,Sh15a,Sh15b}, where two kinds of methods, finite partition method and principle eigenvalue method, were raised to deal with the stability and ergodicity of regime-switching processes.

In this work we use the compactification method to show the existence of the optimal feedback control with respect to a very general finite-horizon cost function. Here our method looks similar to Haussmann and Suo \cite{HS95}, but the technics are quite different. This can be easily seen from the fact that \cite{HS95} cannot deal with the case that the cost function depends on the terminal value the process, but we can. Similar to \cite{HS95}, the feedback controls considered in this paper is a probability measure valued process, which is a kind of relaxed control. The ordinary control policies, i.e. controls taking values in some subset $U$ of the Euclidean space, can be viewed as a special kind of relaxed controls via identifying the point $x\in U$ with the Dirac measure $\delta_x$. See, for example, \cite{HL90} for the existence of optimal ordinary control; \cite[Theorem 3.6]{HS95} for some discussions on deriving the existence of optimal ordinary control from the existence of optimal relaxed control. Moreover, the dynamic programming principle is established in the end of this work, which enables us to study further the (viscosity) solution to the corresponding Hamilton-Jacobi-Bellman equation in the future.

This paper is organized as follows. In Section 2, we introduce the class of admissible feedback controls and prove the existence of the optimal feedback control by using compactification method. Section 3 is devoted to  establishing the dynamic programming principle.

\section{Existence of optimal controls}

\subsection{Framework and statement of the result}
Let $\S=\{1,2,\ldots,N\}$ with $N<\infty$. $T$ is a positive constant given throughout this work. $U$ is a compact set of, say, $\R^k$ for some $k\in \N$, and $\pb(U)$ denotes the collection of all probability measures over $U$. For any two probability measures $\mu$ and $\nu$ in $\pb(U)$,
their $L^1$-Wasserstein distance is defined as:
\[W_1(\mu,\nu)=\inf_{\Gamma\in \C(\mu,\nu)}\Big\{\int_{U\times U}\!\!|x-y|\Gamma(\d x,\d y)\Big\},\]
where $\C(\mu,\nu)$ stands for the set of all couplings of $\mu$ and $\nu$ on $U\times U$. See \cite[Chapter 7]{AGS} or \cite{Vi} for more discussions on the Wasserstein distance and the geometry of $\pb(U)$.

Let $E$ be a metric space. For $0\leq a<b\leq T$,
\begin{itemize}
\item $\mathcal{C}([a,b];\!E)$ is the collection of continuous functions $x:[a, b]\ra E$;
\item $\mathcal{D}([a,b];\!E)$ is the collection of right-continuous functions with left limits $x:\![a,b] \!\ra \!E$.
\end{itemize}

Denote by $x_{[s,t]}$ the function $x_\cdot$ in $\mathcal{C}([s,t];E)$ or $\mathcal{D}([s,t];E)$ with $s,t\in [0,T]$, and it can be extended to the whole interval $[0,T]$ through the map $\Xi$:
\begin{equation}\label{ext-1}
(\Xi x_{[s,t]})_r=\begin{cases}
  x_s,& \text{if $r\leq s$,}\\
  x_r,& \text{if $s< r <t$,}\\
  x_t,& \text{if $r\geq t$.}
\end{cases}
\end{equation}

Give a probability space $(\Omega,\F,\p)$ endowed with a complete filtration $\{\F_t\}_{t\geq 0}$. Consider the following stochastic dynamical system
\begin{equation}\label{l-1}
\d X_t=b(X_t,\La_t,\mu_t)\d t+\sigma(X_t,\La_t,\mu_t)\d B_t,
\end{equation}
where $b:\R^d\times\S\times \pb(U)\ra \R^d$, $\sigma:\R^d\times \S\times \pb(U)\ra \R^{d\times d}$, and $(B_t)_{t\geq 0}$ is a $d$-dimensional $\F_t$-Brownian motion. Here $(\La_t)_{t\geq 0}$ is a continuous-time jumping process on $\S$ satisfying
\begin{equation}\label{l-2}
\p(\La_{t+\delta}=j|\La_t=i,\ X_t=x,\nu_t=\nu)=\begin{cases}
                                        q_{ij}(x,\nu)\delta+o(\delta), & \mbox{if $j\neq i$ },  \\
                                        1+q_{ii}(x,\nu)\delta+o(\delta), & \mbox{otherwise},
                                      \end{cases}
\end{equation} provided $\delta>0$ for every $x\in \R^d$, $\nu\in \pb(U)$, $i,\,j\in\S$.
In this controlled system  \eqref{l-1} and \eqref{l-2}, we consider two kinds of controls: $\mu_\cdot$ and $\nu_\cdot$, which are both measurable maps from $[0,T]$ to $\pb(U)$. The term $\mu_\cdot$ is a kind of classical relaxed control for stochastic dynamical system which has been studied in many works. The  term $\nu_t$ is a special control policy for regime-switching processes, which is used to control the transition rate matrices of the jumping process $(\La_t)$. As $(\La_t)$ is a jumping process in a discrete state space, the role played by the control term $\nu_t$ is quite different to that played by the term $\mu_t$ in the evolution of the studied dynamic system. This kind of control $\nu_t$ has not been studied in the optimal control problem for regime-switching processes before. In addition,  this control is closely related to the control policy used in the study of continuous-time Markov decision processes (cf. e.g. \cite{Guo07, GH, GVZ} and references therein). See \cite{Sh19a} for more discussion on their relationship.

The feedback controls studied in this work are introduced as follows.

\begin{mydefn}\label{def-1}
For each  $(s,x,i)\!\in \! [0,T)\times \R^d\times \S$, a feedback control $\alpha=(\mu_t,\nu_t)_{t\in [s,T]}$ is said to be admissible if $\mu:[s,T]\to \pb(U)$,   $\nu:[s,T]\to \pb(U)$ are measurable such that
\begin{itemize}
  \item[$1^\circ$]  SDEs \eqref{l-1} and \eqref{l-2} admit a strong solution $(X_t,\La_t)$ with initial value $(X_s,\La_s)=(x,i)$.
  \item[$2^\circ$] $\mu_t$ and $\nu_t$ are adapted to the $\sigma$-fields $\mathscr{F}_t=\overline{\sigma\{(X_u,\La_u); s\leq u\leq t\}}$ for almost all $t\in [0,T]$. Here the over line in $\overline{\sigma\{(X_u,\La_u); s\leq u\leq t\}}$ means the completion of the $\sigma\{(X_u,\La_u); s\leq u\leq t\}$.
\end{itemize}
\end{mydefn}

Denote by $\Pi_{s,x,i}$ the collection of all admissible feedback controls with initial value $(X_s,\La_s)=(x,i)$ for $(s,x,i)\!\in \! [0,T)\times \R^d\times \S$. The class $\Pi_{s,x,i}$ contains many interesting controls, especially, it contains the path dependent feedback controls on the component $(X_t)$.  Due to Lemma \ref{app-2} in the Appendix, the condition $2^\circ$ in Definition \ref{def-1} that $\mu_t$ is adapted to $\overline{\sigma\{(X_u,\La_u); s\!\leq u\leq\! t\}}$ yields that there is a  measurable function $F_t$ such that $\mu_t=F_t(X_{[s,t]},\La_{[s,t]})$ almost surely. Therefore, condition $2^\circ$ in Definition \ref{def-1} ensures that the control policies $\mu_t$ and $\nu_t$ are indeed a kind of feedback control.
According to \cite[Theorem T46, p.68]{Mey}, for any measurable process adapted to the $\sigma$-fields $\F_t$, there exists a modification of this process progressively measurable with respect to the same family $\F_t$. Thus, it is enough to assume $\mu_t$ and $\nu_t$ to be adapted to the $\sigma$-fields generated by the process $(X_t,\La_t)$ in Definition \ref{def-1}.
%Moreover, due to \cite[Proposition 4.8]{RY}, for processes with right continuous paths, adapted and progressively measurable are the same thing. As all the processes considered in this work admit right continuous paths,  the condition that $\mu_t$ and $\nu_t$ are adapted is equivalent to that they are progressively measurable in Definition \ref{def-1}.

\begin{myrem}\label{rem-1}
The request that condition $2^\circ$ of Definition \ref{def-1} holds for almost all $t$ has been used in Linquist \cite{Lin}. As a solution to SDEs \eqref{l-1}, \eqref{l-2} $($equivalently, \eqref{l-7} below$)$, the controlled system $(X_t,\La_t)$ remains the same when modifying $\mu_t$ and $\nu_t$ for $t\in [0,T]$ in a null set of the Lebesgure measure. So condition $2^\circ$ of Definition \ref{def-1} can be modified to require that $\mu_t$ and $\nu_t$ are adapted to $\F_t$ for every $t\in [0,T]$.
\end{myrem}

Haussmann and Suo \cite{HS95} assumed the existence of martingale solution of the corresponding stochastic dynamical system and proved the existence of optimal control which is not necessary a feedback control policy. In contrast to \cite{HS95}, some explicit conditions on the coefficients of the studied system \eqref{l-1} and \eqref{l-2} will be presented below to ensure the existence of strong solution of the studied system. By taking advantage of this property, we can show the existence of the optimal feedback controls.

Given two measurable functions $f:[0,T]\times \R^d\times \S\times \pb(U)\times \pb(U)\ra [0,\infty)$ and $g:\R^d\ra [0,\infty)$, the expected cost relative to the control $\alpha$ is defined by
\begin{equation}\label{l-4}
  J(s,x,i,\alpha)=\E\Big[\int_s^T f(t,X_t,\La_t,\mu_t,\nu_t)\d t+g(X_T)\Big].
\end{equation}
The corresponding value function is defined by
\begin{equation}\label{l-5}
  V(s,x,i)=\inf_{\alpha\in  \Pi_{s,x,i}} J(s,x,i,\alpha).
\end{equation}
An admissible control $\alpha^\ast\in   \Pi_{s,x,i}$ is called optimal, if it holds
\begin{equation}\label{l-6}
  V(s,x,i)=J(s,x,i,\alpha^\ast).
\end{equation}

The hypothesises on the coefficients of $(X_t,\La_t)$ are listed as follows in order to ensure the existence of strong solution $(X_t,\La_t)$ satisfying \eqref{l-1} and \eqref{l-2}. %which are not necessarily sharp obviously.
\begin{itemize}
  \item[(H1)] There exists a constant $C_1>0$ such that
  \begin{gather*}
  |b(x,i,\mu)-b(y,i,\nu)|^2+\|\sigma(x,i,\mu)-\sigma(y,i,\nu)\|^2\leq\! C_1\big(|x-y|^2+W_1(\mu,\nu)^2\big)
  \end{gather*} for $x,y\!\in\!\R^d, i\!\in\!\S,\mu,\,\nu\!\in \!\pb(U)$,
  where $|x|^2=\sum_{k=1}^d x_k^2$, $\|\sigma\|^2=\mathrm{tr}(\sigma\sigma')$, and $\sigma'$ denotes the transpose of the matrix $\sigma$.
  \item[(H2)] For every $x\in \R^d$, $\nu\in \pb(U)$, $(q_{ij}(x,\nu))$ is conservative, i.e. $q_i(x,\nu)=\sum_{j\neq i}q_{ij}(x,\nu)$ for every $i\in \S$. Moreover, $M:=\sup_{x\in\R^d,\nu\in\pb(U)}\max_{i\in\S}q_i(x,\nu)<\infty$.
  \item[(H3)] There exists a constant $C_2>0$ such that for every $i,\,j\in\S$, $x,\,y\in \R^d,\ \mu,\nu\in\pb(U)$,
  \begin{align*} |q_{ij}(x,\mu)-q_{ij}(y,\nu)|\leq C_2(|x-y|+W_1(\mu,\nu)).
      \end{align*}
  \item[(H4)] $U\subset \R^k$ is compact for some $k\in \N$.
\end{itemize}

Our first main result of this work is on the existence of the optimal feedback control.
\begin{mythm}\label{t1} Assume that (H1)-(H4) hold and $f:[0,T]\times \R^d\times \S\times \pb(U)\times \pb(U)\to \R$, and $g:\R^d\to \R$ are lower semicontinuous and bounded from below.
Then for every $(s,x,i)\in [0,T]\times\R^d\times \S$, there exists an optimal admissible control $\alpha^\ast\in  \Pi_{s,x,i}$ corresponding to the value function $V(s,x,i)$.
\end{mythm}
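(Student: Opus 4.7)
The plan is to carry out the compactification method: pick a minimizing sequence of admissible feedback controls, extract a weakly convergent subsequence, identify the limit as an admissible feedback control for \eqref{l-1}--\eqref{l-2}, and use lower semicontinuity to conclude optimality. Concretely, fix $(s,x,i)$ and choose $\alpha^n=(\mu^n_\cdot,\nu^n_\cdot)\in\Pi_{s,x,i}$ with $J(s,x,i,\alpha^n)\downarrow V(s,x,i)$, and let $(X^n,\La^n)$ denote the corresponding strong solutions. By Lemma \ref{app-2} I may write $\mu^n_t=F^n_t(X^n_{[s,t]},\La^n_{[s,t]})$ and similarly for $\nu^n_t$, so each control is a genuine measurable functional of the past of the state process.

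Next I would establish joint tightness. Since $U$ is compact, $\pb(U)$ endowed with $W_1$ is compact, and consequently the space of measurable maps $[s,T]\to\pb(U)$ carries a natural compact Young-measure topology; so no uniform estimate on the controls is required. Tightness of $\{X^n\}$ in $\mathcal{C}([s,T];\R^d)$ follows from (H1) by standard moment bounds and the Kolmogorov criterion, while tightness of $\{\La^n\}$ in $\mathcal{D}([s,T];\S)$ follows from (H2) and the Aldous condition, using that the jump intensities $q_{ij}(X^n_t,\nu^n_t)$ are uniformly bounded by $M$. Passing to a subsequence and invoking Skorohod's representation, I may assume $(X^n,\La^n,\mu^n,\nu^n)\to(X^*,\La^*,\mu^*,\nu^*)$ almost surely on some probability space, with $\mu^*_\cdot,\nu^*_\cdot$ measurable maps into $\pb(U)$. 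Because (H1) and (H3) are Lipschitz in $W_1$, I can then pass to the limit in the SDE \eqref{l-1} and in the martingale problem attached to \eqref{l-2} (comparing the compensators $\sum_{j\ne\La^n_t}q_{\La^n_t j}(X^n_t,\nu^n_t)\d t$) to conclude that $(X^*,\La^*)$ solves \eqref{l-1}--\eqref{l-2} driven by $(\mu^*,\nu^*)$ with initial datum $(x,i)$.

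The main obstacle, and the step that separates this work from the open-loop setting of \cite{HS95}, is verifying condition $2^\circ$ of Definition \ref{def-1} for $(\mu^*,\nu^*)$: adaptedness of the limit controls to the filtration generated by the limit state process. My plan is to exploit the explicit representation $\mu^n_t=F^n_t(X^n_{[s,t]},\La^n_{[s,t]})$ together with the a.s. Skorohod convergence. For any bounded continuous $\varphi:\pb(U)\to\R$ and any bounded continuous test functional $\psi$ on $\mathcal{C}([s,t];\R^d)\times\mathcal{D}([s,t];\S)$, one has, for each $n$,
\[
\E\bigl[\varphi(\mu^n_t)\,\psi(X^n_{[s,t]},\La^n_{[s,t]})\bigr]=\E\bigl[\varphi(F^n_t(X^n_{[s,t]},\La^n_{[s,t]}))\,\psi(X^n_{[s,t]},\La^n_{[s,t]})\bigr].
\]
Passing to the limit (with the help of dominated convergence and the compactness of $\pb(U)$ under $W_1$), and comparing with the conditional expectation characterization, I would deduce that $\mu^*_t$ is $\overline{\sigma\{(X^*_u,\La^*_u):s\le u\le t\}}$-measurable for almost every $t$; the same argument applies to $\nu^*_t$. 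This is the delicate technical step, because the functionals $F^n_t$ need not converge in any strong sense and one must use the Young-measure / $W_1$ compactness to identify the limit as a measurable functional of $(X^*,\La^*)$.

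Once feedback adaptedness is in hand, optimality is immediate: Definition \ref{def-1} is satisfied so $\alpha^*=(\mu^*,\nu^*)\in\Pi_{s,x,i}$, and the lower semicontinuity of $f$ and $g$ combined with their lower boundedness yields, via Fatou's lemma along the Skorohod realization,
\[
J(s,x,i,\alpha^*)=\E\!\left[\int_s^T\! f(t,X^*_t,\La^*_t,\mu^*_t,\nu^*_t)\,\d t+g(X^*_T)\right]\le\liminf_{n\to\infty}J(s,x,i,\alpha^n)=V(s,x,i),
\]
so $\alpha^*$ attains the infimum. The two technical ingredients that will do the real work are the $W_1$-Lipschitz hypotheses (H1) and (H3), which make the drift, diffusion and jump rate coefficients continuous in the control variable $\mu,\nu$ under weak convergence, and the Young-measure compactness of $\pb(U)$-valued controls which makes no integrability assumption on the controls themselves necessary.
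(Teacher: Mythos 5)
Your overall architecture matches the paper's: minimizing sequence, tightness of the laws on the canonical path space, Skorokhod representation, passage to the limit in the state equations, adaptedness of the limit control, and Fatou/lower semicontinuity. You also correctly isolate the crux, namely condition $2^\circ$ of Definition \ref{def-1} for the limit pair $(\mu^*,\nu^*)$. But the argument you propose for that crux does not close. The identity
\[
\E\bigl[\varphi(\mu^n_t)\,\psi(X^n_{[s,t]},\La^n_{[s,t]})\bigr]=\E\bigl[\varphi(F^n_t(X^n_{[s,t]},\La^n_{[s,t]}))\,\psi(X^n_{[s,t]},\La^n_{[s,t]})\bigr]
\]
is a tautology (both sides are expectations of the same random variable), so passing to the limit in it yields only $\E[\varphi(\mu^*_t)\psi(X^*_{[s,t]},\La^*_{[s,t]})]=\lim_n\E[\varphi(\mu^n_t)\psi(X^n_{[s,t]},\La^n_{[s,t]})]$, which is automatic from a.s.\ convergence and boundedness and carries no measurability information. ``Comparing with the conditional expectation characterization'' would require exhibiting a measurable functional $F_t$ of the limit paths with $\varphi(\mu^*_t)=\E[\varphi(\mu^*_t)\mid\sigma(X^*_{[s,t]},\La^*_{[s,t]})]$ almost surely for all $\varphi$, i.e.\ exactly the statement to be proved; since, as you concede, the $F^n_t$ need not converge in any sense, your scheme gives no way to produce it. The paper's device is different and is what actually does the work: it introduces the decreasing family $\F^{X,\La}_{-n,t}=\overline{\sigma\{(X^{(m)}_r,\La^{(m)}_r);\,m\ge n,\ r\in[0,t]\}}$, observes that $\mu^{(n)}_t$ is $\F^{X,\La}_{-n,t}$-measurable, proves separately (Lemma \ref{lem-lusin}) that $W_1(\mu^{(n)}_t,\mu^{(0)}_t)\to0$ a.s.\ for a.e.\ $t$, and concludes that $\mu^{(0)}_t$, as an a.s.\ limit of variables measurable with respect to a decreasing family, is measurable with respect to $\bigcap_n\F^{X,\La}_{-n,t}$, which is then identified with $\overline{\sigma\{(X^{(0)}_r,\La^{(0)}_r);\,r\in[0,t]\}}$. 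You need this (or an equivalent) mechanism; the test-functional computation as written is not one.

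A second, smaller gap: you invoke only Young-measure (time-averaged) convergence of $\mu^n$ to $\mu^*$, but both the adaptedness step and the final Fatou step require convergence of $\mu^n_t$ to $\mu^*_t$ in $\pb(U)$ for almost every fixed $t$, since $f$ is only lower semicontinuous in $(\mu,\nu)$ pointwise in $t$. Deducing a.e.-$t$ convergence from weak convergence of the measures $\mu^n_r(\d u)\,\d r$ is precisely the content of the paper's Lemma \ref{lem-lusin} (via Lusin's theorem and compactness of $\pb(U)$); it has to be stated and proved, not assumed. The remaining ingredients of your outline (tightness via Aldous/Kolmogorov in place of Kurtz/Billingsley, limit identification of $(\La_t)$ via the martingale problem instead of the Poisson random measure representation \eqref{l-7}) are acceptable variants of what the paper does.
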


Note that the assumptions (H1)-(H3) ensure that SDEs \eqref{l-1} and \eqref{l-2} admit a unique strong solution given $\mu_t\equiv \mu$ and $\nu_t\equiv \nu$ in $\pb(U)$, which are also used to show the tightness of the distributions of $(X_t^{(n)})_{n\geq 1}$ in the proof of Theorem \ref{t1}. The Lipschitz conditions can be replaced by some non-Lipschitz conditions to ensure the existence of strong solutions for such kind of system.  See, for instance, \cite{Sh15c} for the existence of strong solutions of state-dependent regime-switching processes from the viewpoint of SDEs, and \cite{SWY} for the existence of strong solution of stochastic functional differential equations under non-Lipschitz conditions.

\subsection{Proof of Theorem \ref{t1}}

Before proving Theorem \ref{t1}, we make some necessary preparations.
Let
$\pb(U)$ be endowed with the $L^1$-Wasserstein distance.
$\mathcal{C}([0,T];\R^d)$ is endowed with the uniform topology, and $ \mathcal D([0,T];\pb(U))$, $\mathcal D([0,T];\S)$ are endowed  with pseudopath topology which makes $ \mathcal D([0,T];\pb(U))$ and $\mathcal{D}([0,T];\S)$ to be Polish spaces (See, for instance, \cite[Theorem 5.6, p.121]{EK}).
Let
\begin{equation}\label{sp-u}
\mathscr{U}=\{\mu:[0,T]\to \pb(U) \ \text{is Borel measurable}\}.
\end{equation}
We view $\mathscr{U}$ as a subspace of $\pb([0,T]\times U)$ through the map
 \[(\mu_t)_{t\in[0,T]}\mapsto \bar \mu,\]
where $\bar \mu$ is defined as follows: for $A\in \mathscr{B}([0,T])$, $B\in \mathscr{B}(U)$, define
\[\bar\mu(A\times B)=\frac 1T\int_A\mu_t(B)\d t.\]
Endow $\mathscr{U}$ with the weak topology, which is equivalent to the topology induced from the $L^1$-Wasserstein distance in $\pb([0,T]\times U)$ defined as
\[W_1(\bar \mu,\bar \nu)=\inf_{\Gamma\in \C(\bar\mu,\bar \nu)}\int_{([0,T]\!\times\!U)^2}\!\big(|s-t|+|x-y|\big)\,\Gamma((\d s,\d x),(\d t,\d y)).\]
due to the boundedness of $[0,T]\times U$. Moreover, since $[0,T]\times U$ is compact,  the space $\mathscr{U}$, as a closed set of the compact space $\pb([0,T]\times U)$, is also compact  (cf. \cite{AGS, Vi}).

Let
\[\mathcal Y= \mathcal{C}([0,T];\R^d)\times \mathcal D([0,T];\S)\times \mathscr{U}\times \mathscr{U},\]
and $\wt{\mathcal Y}$ the Borel $\sigma$-field, $\wt {\mathcal{Y}}_t$ the $\sigma$-fields up to time $t$.
Then, as a product space endowed with the product topology, $\mathcal Y$ is a Polish space.

%Since SDEs \eqref{l-1} and \eqref{l-2} admit a strong solution for every admissible control, we can construct the admissible controls in the canonical path space. There is a probability measure $R$ on $\mathcal Y$, and a Brownian motion $(B_t)_{t\geq 0}$ in $\mathcal C([0,T];\R^d)$ adapted to the natural $\sigma$-fields $\wt{\mathcal{Y}}_t$ (cf. the construction of Brownian motion in path space given by Malliavin [ ]).
In the argument of Theorem \ref{t1}, we shall consider the tightness of the distributions of admissible controls by transforming them into the canonical space $\mathcal Y$ via  a measurable map $\Psi$. For an admissible control $\alpha=(\mu_\cdot,\nu_\cdot)$ in $\Pi_{s,x,i}$, $\Psi_\alpha:\Omega\ra \mathcal Y$ is defined by
\[\Psi_\alpha(\omega)=(X_t(\omega),\La_t(\omega),\mu_t(\omega),\nu_t(\omega))_{t\in [0,T]}.\]
Here,  $X_r(\omega):=x,\ \La_r(\omega):=i,\mu_r(\omega):=\mu_s$, and $\nu_r(\omega):=\nu_s$ for $ r\in  [0,s]$.  Let $R=\p\circ \Psi_\alpha^{-1}$ be the corresponding probability measure on $\mathcal Y$ associated with the control $\alpha=(\mu_\cdot,\nu_\cdot)$.
Let
\begin{equation}\label{RR0}
\mathcal{R}_{s,x,i}=\big\{R=\p\circ\Psi_\alpha^{-1}; \alpha\in \Pi_{s,x,i}\big\}.
\end{equation}

%Similar to  Haussmann and Suo \cite{HS95}, in this canonical space, the $\psi$-relaxed control is completely determined by the probability measure $R$ joining with the additional condition that  $\mu_t$ and $\nu_t$ are adapted to the $\sigma$-fields $\sigma\{X_r,\La_r;\  r\leq t\}$. So, we shall use $R$ to represent the relaxed control $(\mathcal Y,\wt{\mathcal Y},\wt{\mathcal Y}_t,R, B_t, X_t,\La_t,\mu_t,\nu_t, s,x,\theta)$ in such situation.

%\begin{myrem}
%  When we introduce the map $\Psi$, we omit the transform of the Brownian motion $(B_t)$ into its path space. The purpose of this treatment is to simplify the notation. Its reasonability is deduced from the existence of the strong solution of SDEs \eqref{l-1} and \eqref{l-2}  under the condition $\mathrm{(H1)}$-$\mathrm{(H3)}$. If we include the distribution of $(B_t)$ into $R$, we need one more argument in $R$, but it will be always associated with a standard Brownian motion, and hence has no impact in the proof below, especially, the tightness of a sequence of $R_n$, $n\geq 1$.
%\end{myrem}

As a preparation, we introduce Skorokhod's representation of $(\La_t)$ in terms of the Poisson random measure as in \cite[Chapter II-2.1]{Sk89} or \cite{YZ}. For each $x\in \R^n$ and $\nu\in \pb(U)$, we construct a family of intervals $\{\Gamma_{ij}(x,\nu); \ i,j\in \S\}$ on the half line in the following manner:
\begin{align*}
  \Gamma_{12}(x,\nu)&=[0,q_{12}(x,\nu))\\
  \Gamma_{13}(x,\nu)&=[q_{12}(x,\nu),q_{12}(x,\nu)+q_{13}(x,\nu))\\
  \ldots&\ldots\ldots\\
  \Gamma_{1N}(x,\nu)&=\big[\sum_{j=1}^{N-1}q_{1j}(x,\nu), q_{1}(x,\nu)\big)\\
  \Gamma_{21}(x,\nu)&=[q_1(x,\nu),q_1(x,\nu)+q_{21}(x,\nu))\\
  \Gamma_{23}(x,\nu)&=[q_1(x,\nu)+q_{21}(x,\nu),q_1(x,\nu)+q_{21}(x,\nu)+q_{23}(x,\nu))
\end{align*}
and so on. Therefore, we obtain a sequence of consecutive, left-closed, right-open intervals $\Gamma_{ij}(x,\nu)$, each having length $q_{ij}(x,\nu)$. For convenience of notation, we set $\Gamma_{ii}(x,\nu)=\emptyset$ and $\Gamma_{ij}(x,\nu)=\emptyset$ if $q_{ij}(x,\nu)=0$.
Define a function $\vartheta:\R^n\times \S\times \pb(U)\times \R\ra \R$ by
\[\vartheta(x,i,\nu,z)=\sum_{l\in \S} (l-i)\mathbf 1_{\Gamma_{il}(x,\nu)}(z).\]
Then the process $(\La_t)$ can be expressed by the following SDE
\begin{equation}\label{l-7}
\d \La_t=\int_{[0,H]} \vartheta(X_t,\La_{t-},\nu_{t-},z)N_1(\d t,\d z),
\end{equation}
where $H=N(N-1) M$, $N_1(\d t,\d z)$ is a Poisson random measure with intensity $\d t\times \mathbf{m}(\d z)$, and $\mathbf{m}(\d z)$ is the Lebesgue measure on $[0,H]$. Here we also assume that the Poisson random measure $N_1$ and the Brownian motion $(B_t)$ are mutually independent.
Let $p_1(t)$ be the stationary point process corresponding to the Poisson random measure $N_1(\d t,\d z)$. Due to the finiteness of $\mathbf{m}(\d z)$ on $[0, H]$, there is only finite number of jumps of the process $p_1(t)$ in each finite time interval. Let $ 0=\varsigma_0<\vsig_1<\ldots<\vsig_n<\ldots$ be the enumeration of all jumps of $p_1(t)$. It holds that $\lim_{n\ra \infty}\vsig_n=+\infty$ almost surely. Due to \eqref{l-7}, it follows that, if $\La_0=i$,
\begin{equation} \label{l-8}
 \La_{\vsig_1}=i+\sum_{l\in\S}(l-i) \mathbf{1}_{\Gamma_{il}(X_{\vsig_1},\nu_{\vsig_1})}(p_1(\vsig_1)).
\end{equation}
This yields that $(\La_t)$ has a jump at $\vsig_1$ (i.e. $\La_{\vsig_1}\neq \La_{\vsig_1-}$) if $p_1(\vsig_1)$ belongs to the interval $\Gamma_{il}(X_{\vsig_1},\nu_{\vsig_1})$ for some $l\neq i$. At any other cases, $(\La_t)$ admits no jump at $\vsig_1$. So the set of jumping times of $(\La_t)$ is a subset of $\{\vsig_1,\vsig_2,\ldots\}$. This fact will be used below without mentioning it again.

\noindent \textbf{Proof of Theorem \ref{t1}}

  If $V(s,x,i)=\infty$, then according to the definition of $V$, any admissible control $\alpha$ will be optimal. Hence, we only need to consider the case $V(s,x,i)<\infty$. To simplify the notation, we consider only $s=0$, and more general cases for $s\in (0,T]$ can be proved in the same way with suitable modification. The proof is separated into three steps.

  \textbf{Step 1}. In this step we show the tightness of a minimizing sequence.

There exists a sequence of admissible controls $\alpha_n=(\mu_\cdot^{(n)},\nu_\cdot^{(n)})$ in $\Pi_{0,x,i}$ such that
  \begin{equation}\label{m-1}
    \lim_{n\ra \infty} J(0,x,i,\alpha_n)=V(0,x,i)<\infty.
  \end{equation}
Denote by $(X_t^{(n)},\La_t^{(n)})$  the controlled system associated with $\alpha_n$.
Let $R_n$, $n\geq 1$, be the joint distribution of $(X_t^{(n)},\La_t^{(n)},\mu_t^{(n)}$, $\nu_t^{(n)})_{t\in[0,T]}$, which is a sequence of probability measures in the canonical space $\mathcal Y$.  In this step we aim to prove the tightness of $(R_n)_{n\geq 1}$. Denote respectively by $\ll_X^n$, $\ll_\La^n$,  $\ll_\mu^n$, and $\ll_\nu^n$ the marginal distribution of $R_n$ for $n\geq 1$.   Since $\ll_\mu^n$ and $\ll_\nu^n$ are located in the compact set $\mathscr{U}$, we do not need to consider the marginal distributions $\ll_\mu^n$ and $\ll_\nu^n$ for the tightness of $(R_n)_{n\geq 1}$.

We first prove that $(\ll_\La^n)_{n\geq 1}$ is tight by using Kurtz's tightness criterion
(cf. \cite[Theorem 8.6, p.137]{EK}). As $\S$ is a finite set, we only need to show there exists a sequence of nonnegative random variable $\gamma_n(\delta)$ such that
  \begin{equation}\label{m-5}
    \E \big[\mathbf{1}_{\La_{t+u}^{(n)}\neq \La_t^{(n)}}\big|\F_t\big]\leq \E \big[\gamma_n(\delta)\big|\F_t\big],\quad 0\leq t\leq T,\ 0\leq u\leq \delta,
  \end{equation}  and $\lim_{\delta\downarrow 0}\sup_n\E[\gamma_n(\delta)]=0$. Due to (H2), the boundedness of $(q_{ij}(x,\nu))$ implies
  \begin{equation*}
    \begin{split}
       \p(\La_r^{(n)}= \La_t^{(n)},\ \forall \,r\in [t,t+u]) & \geq \E \big[\exp\big(-\sup_{x\in\R^d, \nu\in \pb(U)}\max_{j\in \S} q_j(x,\nu) u\big)\big] \\
         & \geq \exp(-M u).
    \end{split}
  \end{equation*}
  Then, for every $0\leq u\leq \delta$,
  \begin{equation}\label{m-6}
    \begin{split}
       \E \big[\mathbf{1}_{\La_{t+u}^{(n)}\neq \La_t^{(n)}}\big|\F_t\big] & \leq 1-\p\big(\La_r^{(n)}=\La_t^{(n)},\ \forall\,r\in [t,t+u]\big) \\
         & \leq 1-\e^{-M \delta}=:\gamma_n(\delta).
    \end{split}
  \end{equation}
  It is clear that $\lim_{\delta\downarrow 0} \sup_n\E [\gamma_n(\delta)]=0$ and \eqref{m-5} is verified. We conclude that $(\ll_{\La}^n)_{n\geq 1}$ is tight.

 %For any $0\leq t_1<t_2\leq T$, by the definition of admissible control, we have
%\begin{align*}
%  W_1(\mu_{t_1}^{(n)},\mu_{t_2}^{(n)})&=W_1(F^{(n)}(t_1,\Xi X_{[0,t_1]}^{(n)},  \La_{t_1}^{(n)}), F^{(n)}(t_2,\Xi X_{[0,t_2]}^{(n)},  \La_{ t_2}^{(n)}))\\
%  &\leq \psi\big(|t_2-t_1|+\sup_{t_1\leq t\leq t_2}|X_{t}^{(n)}-X_{t_1}^{(n)}|+\mathbf{1}_{\La^{(n)}_{t_1}\neq \La^{(n)}_{t_2}}\big)\\
%  &\leq \kappa_2\big(|t_2-t_1|+\sup_{t_1\leq t\leq t_2}|X_t^{(n)}-X_{t_1}^{(n)}|+\mathbf{1}_{\La^{(n)}_{t_1}\neq \La^{(n)}_{t_2}}\big).
%\end{align*}
%Therefore,
%\begin{equation}\label{m-2}
%\E \,W_1(\mu_{t_1}^{(n)},\mu_{t_2}^{(n)})\leq \kappa_2\big(|t_2-t_1|+\E\sup_{t_1\leq t\leq t_2}|X_t^{(n)}-X_{t_1}^{(n)}|+\p(\La^{(n)}_{t_1}\neq \La^{(n)}_{t_2})\big).
%\end{equation}

Since $U$ is compact,  $(\pb(U),W_1)$ is a compact Polish space (cf. e.g.  \cite{AGS}). This implies that the diameter of $\pb(U)$ is finite. Namely, there exists a constant $K>0$ such that \[\mathrm{Diam}(\pb(U)):=\sup_{\mu,\,\nu\in \pb(U)}W_1(\mu,\nu)\leq K.\]
Hence, the global Lipschitz condition (H1) implies the linear growth condition, i.e. there exists a $C>0$ such that $|b(x,i,\mu)|+\|\sigma(x,i,\mu)\|\leq C(1+|x|)$ for every $i\in\S$, $\mu\in \pb(U)$, which leads to
\begin{equation}\label{m-3}
  \E\big[\sup_{0\leq t\leq T}|X_t^{(n)}|^p\big]\leq C(T,x,p), \quad n\geq 1,\ p\geq1,
\end{equation} where $C(T,x,p)$ is a constant depending on $T,\, x,\,p$ (cf.   \cite[Lemma 3.1, p.28]{MY}).

By It\^o's formula, for $0\leq t_1<t_2\leq T$,
\begin{align*}
    &\E|X_{t_2}^{(n)}-X_{t_1}^{(n)}|^4\\
    &\leq 8\E \Big|\int_{t_1}^{t_2}b(X_r^{(n)},\La_r^{(n)},\mu_r^{(n)})\d r\Big|^4+8\E \Big|\int_{t_1}^{t_2}\sigma(X_r^{(n)},\La_r^{(n)},\mu_r^{(n)})\d B_r\Big|^4\\
    &\leq 8(t_2-t_1)^3\E \int_{t_1}^{t_2}|b(X_r^{(n)},\La_r^{(n)},\mu_r^{(n)})|^4\d r+288(t_2-t_1)\E \int_{t_1}^{t_2}|\sigma(X_r^{(n)},\La_r^{(n)},\mu_r^{(n)})|^4\d r\\
    &\leq C(t_2-t_1)\int_{t_1}^{t_2}\big(1+\E |X_r^{(n)}|^4\big)\d r.
\end{align*}
Applying condition (H1) again, we have
$\dis \int_0^T\!\!\!\E |X_r^{(n)}|^4\d r\leq C$  for some constant $C$, independent of $n$, (cf. \cite[Theorem 3.20]{MY}). Furthermore, invoking the fact $X^{(n)}_0=x$, we conclude that $(\ll_X^n)_{n\geq 1}$ is tight by virtue of
\cite[Theorem 12.3]{Bill}.

\textbf{Step 2}. This step is to show that the limit of $(R_n)$ is also an admissible feedback control, which will be showed to be the desired optimal feedback control in  step 3.

Because all the marginal distributions of $R_n$, $n\geq 1$ are tight, we get $R_n$, $n\geq 1$ is tight as well. Indeed, for any $\veps>0$, there exist compact subsets $K_1\subset \mathcal{C}([0,T];\R^n)$, $K_2\subset \mathcal D([0,T];\S)$, and $K_3, K_4=\mathscr{U}$ such that for every $n\geq 1$,
\[\min\{\ll_B^n(K_1), \ll_X^n(K_2), \ll_\La^n(K_3),\ll_\mu^n(K_4)\}\geq 1-\veps.\]
This yields that
\[R_n(K_1\times K_2\times K_3\times K_4)\geq 1-\ll_X^n(K_1^c)-\ll_\La^n(K_2^c)-\ll_\mu^n(K_3^c)-\ll_{\nu}^n(K^c_4)\geq 1-4\veps.\]
So $(R_n)_{n\geq 1} $ is tight.

As a consequence of the tightness of $(R_n)_{n\geq 1}$, up to extracting a subsequence, we have $R_n$ converges weakly to some probability measure $R_0$ on $\mathcal Y$. Since $\mathcal Y$ is a Polish space, according to Skorokhod's representation theorem (cf. \cite{EK}, Theorem 1.8, p.102), there exists a probability space $(\Omega',\F',\p')$ on which defined a sequence of random variables $Y_n=(X_t^{(n)},\La_t^{(n)},\mu_t^{(n)},\nu_t^{(n)})_{t\in [0,T]}\in \mathcal Y$, $n\geq 0$, with the distribution $R_n$, $n\geq 0$, respectively such that
\begin{equation}\label{m-7}
\lim_{n\ra \infty} Y_n=Y_0,\quad \p'\text{-a.s.}.
\end{equation}
In this step we want to show that $Y_0=(X_t^{(0)}, \La_t^{(0)}, \mu_t^{(0)}, \nu_t^{(0)})$ is also associated with an admissible control.

For $0\leq t_1<t_2<\ldots<t_k\leq T$, define the projection map $\pi_{t_1\ldots t_k}:\mathcal D([0,T];\S)\ra \S^k$ by
\[\pi_{t_1\ldots t_k}(\La_\cdot)=(\La_{t_1},\ldots,\La_{t_k}).\]
Let $\mathcal T_0$ consist of those $t\in [0,T]$ for which the projection $\pi_t:\mathcal D([0,T];\S)\ra \S$ is continuous except at points from a set of $R_0$-measure 0. For $t\in [0,T]$, $t\in \mathcal T_0$ if and only if $R_0(J_t)=0$, where
\[J_t=\{\La_\cdot\in \mathcal D([0,T];\S); \La_t\neq \La_{t-}\}.\]
Also, $0,\,T\in \mathcal{T}_0$ by convention.
It is known that the complement of $\mathcal T_0$ in $[0,T]$  is at most countable (cf. \cite[p. 124]{Bill}).
So, for every bounded continuous function $h$ on $\S$,
\begin{equation}\label{con-1}
\lim_{n\ra \infty}\int_s^t h(\La_r^{(n)})\d r =\int_s^t h(\La_r^{(0)})\d r,\quad 0\leq s<t\leq T,\ \p'\text{-a.s.}.
\end{equation}

Now consider the following convergence:  for any bounded continuous function $\Phi$ on $\mathscr{P}(U)$, it holds
\begin{equation}\label{con-2}
\lim_{n\to \infty} \int_s^t \Phi(\mu_r^{(n)})\d r=\int_s^t \Phi(\mu_r^{(0)})\d r,\ 0\leq s<t\leq T,\ \p'\text{-a.s.}.
\end{equation}
To this aim, for any $m\in \N$, take a continuous function $\beta_m:[0,T]\to [0,1]$ such that $\mathrm{Leb}\big\{r\in [0,T];|\beta_m(r)-\mathbf{1}_{[s,t]}(r)|>0\big\}\leq 1/(mK)$, where $K=\sup_{\mu\in \pb(U)}|\Phi(z)|<\infty$ and $\mathrm{Leb}$ denotes the Lebesgure measure over $\R$.
Then
\begin{equation}\label{cov-1}
\begin{split}
  &\Big|\int_{s}^t\Phi(\mu_r^{(0)})\d r-\int_0^T\beta_m(r)\Phi(\mu_r^{(0)})\d r\Big|\leq \frac 1m,\\
  &\Big|\int_{s}^t\Phi(\mu_r^{(n)})\d r-\int_0^T\beta_m(r)\Phi(\mu_r^{(n)})\d r\Big|\leq  \frac 1m,\quad \forall\, n\geq 1.
\end{split}
\end{equation}
Since $(r,u)\mapsto \beta_m(r)\Phi(u)$ is bounded and continuous, it follows from the weak convergence of $\mu_r^{(n)}(\d u)\d r$ to $\mu_r^{(0)}(\d u)\d r$ in $\mathscr{U}$ that there exists $M_1\in\N$ such that for any $n\geq M_1$
\begin{equation}\label{cov-2}
\Big|\int_0^T\beta_m(r)\Phi(\mu_r^{(n)})\d r-\int_0^T\beta_m(r)\Phi(\mu_r^{(0)})\d r\Big|\leq \frac 1m,\quad \p'\text{-a.s.}.
\end{equation}
Combining \eqref{cov-1} and \eqref{cov-2} together, we obtain that there exists $M_1\in \N$ such that for any $n\geq M_1$,
\[\Big|\int_s^t \Phi(\mu_r^{(n)})\d r-\int_s^t\Phi(\mu_r^{(0)})\d r\Big|\leq \frac{3}m,\quad \p'\text{-a.s.}\]
which yields the convergence \eqref{con-2}  by passing $m$ to $\infty$.

According to the definition of stochastic integral with respect to the Brownian motion, it is easy to see that \eqref{con-1} and \eqref{con-2} still hold by replacing $\d r$ with $\d B_r$ up to taking some subsequence if necessary.

Combining \eqref{con-1}, \eqref{con-2} with the almost sure convergence of
$Y_n$ to $Y_0$, %$(X_t^{(n)})_{t\in [0,T]}$ to $(X_t^{(0)})_{t\in [0,T]}$ in $\mathcal{C}([0,T];\R^d)$,
by passing $n$ to $\infty$ in the equation
\begin{equation*}
  X_t^{(n)}=x+\int_0^tb(X_r^{(n)},\La_r^{(n)},\mu_r^{(n)})\d r+\int_0^t\sigma(X_r^{(n)},\La_r^{(n)},\mu_r^{(n)})\d B_r,
\end{equation*}
we obtain that
\begin{equation}\label{m-8}
  X_t^{(0)}=x+\int_0^tb(X_r^{(0)},\La_r^{(0)},\mu_r^{(0)})\d r+\int_0^t\sigma(X_r^{(0)},\La_r^{(0)},\mu_r^{(0)})\d B_r.
\end{equation}

In terms of Skorokhod's representation \eqref{l-7} for jumping process $(\La_t^{(n)})$, we have
\begin{equation}\label{m-9}
  \La_t^{(n)}=i+\int_0^t\int_{[0,H]}\!\vartheta(X_r^{(n)},\La_{r-}^{(n)},\nu_{r-}^{(n)},z)N_1(\d r,\d z).
\end{equation}
Since $(x,\nu)\mapsto q_{ij}(x,\nu)$ is continuous for every $i,j\in \S$, one gets that $\mathbf{1}_{\Gamma_{ij}(y,\nu')}(z)$ tends to $\mathbf{1}_{\Gamma_{ij}(x,\nu)}(z)$ as $|y-x|\ra 0$ and $W_1(\nu',\nu)\ra 0$. Similar to \eqref{con-2}, using continuous functions to approximate the indicator function $\mathbf{1}_{[0,t]}$ uniformly w.r.t. $n$, we obtain from \eqref{m-9} by passing $n\ra \infty$ that
\begin{equation}\label{m-10}
  \La_t^{(0)}=i+\int_0^t\int_{[0,H]}\!\vartheta(X_r^{(0)},\La_{r-}^{(0)},\nu_{r-}^{(0)},z)N_1(\d r,\d z).
\end{equation}
By Skorokhod's representation \eqref{l-7}, this yields that
\[
\p\big(\La_{t+\delta}^{(0)}=j|\La_t^{(0)}=i,X_t^{(0)}=x,\nu_t^{(0)}=\nu\big)=\begin{cases}
  q_{ij}(x,\nu)\delta+o(\delta), &\mbox{if $i\neq j$},\\
  1+q_{ii}(x,\nu)\delta+o(\delta),&\mbox{otherwise},
\end{cases}
\] provided $\delta>0$. Moreover, there is no $t_0\in [0,T]$ such that $\p'(\La_{t_0}^{(0)}\neq \La_{t_0-}^{(0)})>0$, which means that $\mathcal T_0=[0,T]$. Hence, $\lim_{n\ra \infty} \La_t^{(n)}=\La_t^{(0)}$ $\p'$-a.s. for every $t\in [0,T]$.

Till now what is left is to show $\mu_t^{(0)}$ and $\nu_t^{(0)}$ are adapted to the $\sigma$-fields generated by $(X_r^{(0)},\La_r^{(0)})$ up to time $t$.
To this aim, we adopt the notation in the study of backward martingale to define
  $$\F_{-n, t}^{X,\La}=\overline{\sigma\{(X_r^{(m)},\La_r^{(m)});\ m\geq n, r\in [0,t]\}}.$$  Then
\[\F_{-1,t}^{X,\La}\supset\F_{-2,t}^{X,\La}\supset\cdots\supset \F_{-n,t}^{X,\La}\supset\F_{-n-1,t}^{X,\La}\supset\cdots.\] Put $\F_{-\infty,t}^{X,\La}=\bigcap_{n\geq 1}\F_{-n,t}^{X,\La}$.  $\F_{-\infty,t}^{X,\La}$ is easily checked to be a $\sigma$-field which concerns only the limit behavior of the sequence $(X_r^{(n)},\La_r^{(n)})_{r\in [0,t]}$ as $n$ tends to $\infty$. Moreover, since $\lim_{n\ra \infty}\La^{(n)}_t=\La_t^{(0)}$ and $\lim_{n\ra \infty}X_t^{(n)}=X_t^{(0)}$ a.s. for every $t\in [0,T]$,  it holds \[\F_{-\infty}^{X,\La}=\overline{\sigma\{(X_r^{(0)},\La_r^{(0)}); \ r\in [0,t] \}}.\]

Define $\F_{-n,t}^\mu=\overline{\sigma\{\mu_t^{(m)}; m\geq n\}}$. Due to  Definition \ref{def-1}$(2^\circ)$, $\mu_t^{(n)}$ is in $\F_{-n}^{X,\La}$ for each $n\geq 1$, and hence $\F_{-n,t}^\mu\subset \F_{-n,t}^{X,\La}$. Therefore, according to Lemma \ref{lem-lusin} below, the fact $\lim\limits_{n\ra \infty}W_1(\mu_t^{(n)},\mu_t^{(0)})=0$ a.s. yields that
\[\overline{\sigma\{\mu_t^{(0)}\}}\subset \bigcap_{n\geq 1}\F_{-n,t}^\mu\subset \bigcap_{n\geq 1}\F_{-n,t}^{X,\La}= \F_{-\infty,t}^{X,\La}=\overline{\sigma\{(X_r^{(0)},\La_r^{(0)});\ r\in [0,t]\}}.\]
This means that $\mu_t^{(0)}$ is adapted to $\overline{\sigma\{(X_r^{(0)},\La_r^{(0)}); r\in [0,t]\}}$ for almost all $t\in [0,T]$. Similarly, we can show that $\nu_t^{(0)}$ is also adapted to $\F_t^{(0)}$ for almost all $t\in [0,T]$.
%By the discussion in Remark \ref{rem-1}, there exist modifications of $\mu_t^{(0)}$ and $\nu_t^{(0)}$ for $t\in [0,T]$ such that
%$\mu_t^{(0)},\nu_t^{(0)}$ are adapted to $\F_t^{(0)}$ for every $t\in [0,T]$  as desired.
Joining this with \eqref{m-8}, \eqref{m-10}, we finally show that
that $\alpha_0:=(\mu_t^{(0)},\nu_t^{(0)})$ associated with $Y_0=(X_t^{(0)}, \La_t^{(0)}, \mu_t^{(0)}, \nu_t^{(0)})$ is  an admissible feedback control in $\Pi_{0,x,i}$.

\textbf{Step 3}. By \eqref{m-1} and the lower semicontinuity of $f$ and $g$, we have
\begin{align*}
  V(0,x,i)&=\lim_{n\ra \infty} J(0,x,i,\alpha_n)\\
  & =\lim_{n\ra \infty}\E_{\p'}\Big[\int_0^T\!f(t,X_t^{(n)},\La_t^{(n)},\mu_t^{(n)},\nu_t^{(n)})\d t+g(X_T^{(n)})\Big]\\
  &\geq \E_{\p'}\Big[\int_0^T f(t, X_t^{(0)},\La_t^{(0)},\mu_t^{(0)},\nu_t^{(0)})\d t+g(X_T^{(0)})\Big]\\
  &=J(0,x,i,\alpha_0)\\
  &\geq V(0,x,i).
\end{align*}
Therefore, $\alpha_0$ is an optimal admissible feedback control. The proof of this theorem is complete.
\fin

\begin{mylem}\label{lem-lusin}
Let $\mu_t^{(n)},\,\nu_t^{(n)}$, $\mu_t^{(0)},\,\nu_t^{(0)}$ be given in the argument of Theorem \ref{t1}. Then for almost all $t\in[0,T]$,
\[\lim_{n\to \infty} W_1(\mu_t^{(n)},\mu_t^{(0)})=0,\quad \lim_{n\to \infty} W_1(\nu_t^{(n)},\nu_t^{(0)})=0,\quad \p'\text{-a.s.}.\]
\end{mylem}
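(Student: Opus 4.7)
\emph{Proof plan.} The task is to upgrade the $\p'$-almost sure convergence $\bar\mu^{(n)}\to\bar\mu^{(0)}$ of the associated joint measures on $[0,T]\times U$ supplied by the Skorokhod coupling in Theorem \ref{t1} to pointwise $W_1$-convergence of $\mu_t^{(n)}$ to $\mu_t^{(0)}$ for Lebesgue-a.e.\ $t$. The argument for $\nu$ is identical, so I focus on $\mu$.

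Since $U$ is compact, $\pb(U)$ is a compact Polish space, so $W_1$-convergence of probability measures on $U$ coincides with weak convergence, which in turn can be tested against a countable dense family $\{\phi_k\}_{k\geq 1}\subset C(U)$. It therefore suffices to show that for each $k$ and $\p'$-a.e.\ $\omega$, the bounded Borel functions $f_k^{(n)}(t,\omega):=\int_U\phi_k\,\d\mu_t^{(n)}(\omega)$ satisfy $f_k^{(n)}(t,\omega)\to f_k^{(0)}(t,\omega)$ for Lebesgue-a.e.\ $t$.

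Next I would invoke Lusin's theorem (whence the lemma's label): for every $\varepsilon>0$ and each $n,k$, there is a compact $A_{n,k,\varepsilon}(\omega)\subset[0,T]$ with Lebesgue complement at most $\varepsilon 2^{-(n+k)}$ on which $f_k^{(n)}(\cdot,\omega)$ is continuous. Setting $A_\varepsilon(\omega)=\bigcap_{n,k}A_{n,k,\varepsilon}(\omega)$ gives a set of measure at least $T-\varepsilon$ on which every $f_k^{(n)}(\cdot,\omega)$ is simultaneously continuous. For a Lebesgue density point $t\in A_\varepsilon(\omega)$ that is also a Lebesgue point of each $f_k^{(0)}(\cdot,\omega)$, I would use weak convergence $\bar\mu^{(n)}\to\bar\mu^{(0)}$ plus the Portmanteau theorem (the boundary $\{t\pm\delta\}\times U$ is $\bar\mu^{(0)}$-null since the first marginal is $T^{-1}\mathrm{Leb}$) to obtain, for every fixed $\delta>0$,
\[
\frac{1}{2\delta}\int_{t-\delta}^{t+\delta}f_k^{(n)}(s,\omega)\,\d s\;\longrightarrow\;\frac{1}{2\delta}\int_{t-\delta}^{t+\delta}f_k^{(0)}(s,\omega)\,\d s\qquad\text{as }n\to\infty.
\]
Then the Lusin continuity of $f_k^{(n)}(\cdot,\omega)$ on $A_\varepsilon(\omega)$ would be used to replace these averages by the pointwise values $f_k^{(n)}(t,\omega)$ for small $\delta$, while the Lebesgue point property handles $f_k^{(0)}(t,\omega)$; a diagonal choice $\delta=\delta_n\downarrow 0$ then yields $f_k^{(n)}(t,\omega)\to f_k^{(0)}(t,\omega)$. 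Finally, letting $\varepsilon=\varepsilon_m\to 0$ and intersecting the resulting full-measure events produces the claimed a.e.\ convergence for every $k$.

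The delicate step, and the main obstacle, is the interchange of limits $\delta\to 0$ and $n\to\infty$: Lusin supplies continuity on $A_\varepsilon(\omega)$, but the modulus is $n$-dependent, so $\delta_n$ must be chosen slowly enough for the restricted averages along $A_\varepsilon(\omega)$ to approximate $f_k^{(n)}(t,\omega)$, yet fast enough to preserve the Portmanteau estimate. Boundedness of $f_k^{(n)}$ by $\|\phi_k\|_\infty$ together with the density-point property (which makes the excursions $(t-\delta,t+\delta)\setminus A_\varepsilon(\omega)$ of measure $o(\delta)$) provide the uniform control needed to carry out this diagonal argument.
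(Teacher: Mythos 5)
Your strategy is essentially the paper's: reduce $W_1$-convergence on the compact set $U$ to weak convergence tested against a countable family $\{\phi_k\}\subset C(U)$, and then try to localize the weak convergence $\bar\mu^{(n)}\to\bar\mu^{(0)}$ in the time variable via Lusin's theorem. The paper tests against products $\beta_m(r)\phi(u)$ with $\beta_m$ a continuous (Lusin) approximation of $\mathbf{1}_A$, deduces $\int_A\int_U\phi\,\d\mu_r^{(n)}\,\d r\to\int_A\int_U\phi\,\d\mu_r^{(0)}\,\d r$ for every Borel $A\subset[0,T]$, and then passes to a.e.\ convergence ``by the arbitrariness of $A$ and $\phi$''. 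You have correctly located the crux of the lemma at exactly that passage, which in your write-up is the interchange of the limits $\delta\to 0$ and $n\to\infty$.

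That step, however, is not merely delicate: it cannot be carried out from the ingredients you list (uniform boundedness, Lusin continuity, Lebesgue density points). What the weak convergence of $\bar\mu^{(n)}$ gives is precisely weak-$*$ convergence in $L^\infty([0,T])$ of the uniformly bounded functions $f_k^{(n)}(t)=\int_U\phi_k\,\d\mu_t^{(n)}$, and weak-$*$ convergence does not imply a.e.\ convergence along the full sequence. Concretely, take $U=[-1,1]$, $\mu_t^{(n)}=\delta_{\sin(nt)}$ (a legitimate deterministic admissible control) and $\phi(u)=u$: then $f^{(n)}(t)=\sin(nt)$ is everywhere continuous (so $A_\varepsilon=[0,T]$ and the density-point refinement is vacuous), $\frac{1}{2\delta}\int_{t-\delta}^{t+\delta}f^{(n)}(s)\,\d s=O(1/(n\delta))\to 0$ for each fixed $\delta$, yet $f^{(n)}(t)$ converges only on a null set; the first term of your three-term splitting, $\lvert f^{(n)}(t)-\mathrm{avg}_\delta f^{(n)}(t)\rvert$, remains of order $1$ for every choice of $\delta_n\downarrow 0$ because the modulus of continuity of $f^{(n)}$ degenerates at exactly the rate at which the averages converge. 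In this example $\bar\mu^{(n)}\to\bar\mu^{(0)}$ with $\mu_t^{(0)}$ equal to the arcsine law for every $t$, and $W_1(\mu_t^{(n)},\mu_t^{(0)})$ is bounded away from $0$ for all $t$ and $n$; so the conclusion of the lemma cannot be extracted from the weak convergence of $\bar\mu^{(n)}$ alone, and any valid proof must inject additional structure of the minimizing sequence (or the downstream adaptedness argument must be rephrased to use only the time-averaged quantities $\int_A\int_U\phi\,\d\mu_r^{(n)}\,\d r$, which your computation does establish). In fairness, the paper's own proof makes the same unjustified leap at the same point.
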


\begin{proof}
  According to the $\p'$-a.s. convergence of $(\mu_t^{(n)})$ to $(\mu_t^{(0)})$ in $\mathscr{U}$, we obtain that for any bounded continuous functions $\beta:[0,T]\to \R$, $\phi:U\to \R$,
  \begin{equation}\label{el-1}\lim_{n\to \infty}\int_0^T\int_{U}\beta(r)\phi(u)\mu_r^{(n)}(\d u)\d r=\int_0^T\int_U\beta(r)\phi(u)\mu_r^{(0)}(\d u)\d r,\ \ \p'\text{-a.s.}.
  \end{equation}
  For any $A\in \mathscr{B}([0,T])$, denote by $\mathbf{1}_A$ its indicator function. According to Lusin's theorem, for each $m\in \N$ there exists a continuous function $\beta_m$ on $[0,T]$ such that
  \[\mathbf{m}(\{\mathbf1_A\neq \beta_m\})\leq \frac1{m}\] and $\sup_{t\in [0,T]} |\beta_m(t)|\leq \sup_{t\in[0,T]}|\mathbf1_{A}(t)|=1$, where $\mathbf{m}(\,\cdot\,)$ denotes the Lebesgue measure on $[0,\infty)$. For any bounded continuous function $\phi:U\to \R$ with $K:=\sup_{u\in U}|\phi(u)|<\infty$,
  \begin{align*}
    &\Big|\int_0^T\!\!\int_U\mathbf{1}_A(r)\phi(u)\mu_r^{(n)}(\d u)\d r-\int_0^T\!\!\int_U\mathbf 1_A(r)\phi(u)\mu_r^{(0)}(\d u)\d r\Big|\\
    &\leq \Big|\int_0^T\!\!\int_U\big(\mathbf1_{A}(r)\!-\!\beta_m(r)\big)\phi(u)\mu_r^{(n)}(\d u)\d r\Big|\!+\!\Big|\int_0^T\!\!\int_U\!\big(\beta_m(r)\!-\!\mathbf1_A(r)\big)
    \phi(u)\mu_r^{(0)}(\d u)\d r\Big|\\
    &\quad+\Big|\int_0^T\!\!\int_U\beta_m(r)\phi(u)\mu_r^{(n)}(\d u)\d r\!-\!\int_0^T\!\!\int_U\!\beta_m(r)\phi(u)\mu_r^{(0)}(\d u)\d r\Big|\\
    &\leq \frac{4K}{m}\!+\!\Big|\int_0^T\!\!\int_U\beta_m(r)\phi(u)\mu_r^{(n)}(\d u)\d r\!-\!\int_0^T\!\!\int_U\!\beta_m(r)\phi(u)\mu_r^{(0)}(\d u)\d r\Big|
  \end{align*}
  Invoking \eqref{el-1}, for every $\veps>0$, taking first $m\in \N$ large enough then $n\in \N$ large enough, we can get
  \[\Big|\int_0^T\!\!\int_U\mathbf{1}_A(r)\phi(u)\mu_r^{(n)}(\d u)\d r-\int_0^T\!\!\int_U\mathbf 1_A(r)\phi(u)\mu_r^{(0)}(\d u)\d r\Big|\leq \veps,\quad \p'\text{-a.s.}.\]
  Namely,
  \begin{equation}\label{el-2}
  \lim_{n\to \infty} \int_A\int_U\phi(u)\mu_r^{(n)}(\d u)\d r=\int_A\int_U\phi(u)\mu_r^{(0)}(\d u)\d r \quad \p'\text{-a.s.}.
  \end{equation}
  Due to the arbitrariness of $A$ and $\phi$, we obtain that for $\mathbf{m}$-a.e. $t$, $\mu_t^{(n)}$ converges weakly to $\mu_t^{(0)}$ $\p'$-a.s..  Since $U$ is a compact set, the topology induced by the Wasserstein distance $W_1$ is equivalent to the weak topology on $\pb(U)$, and hence the desired conclusion follows immediately. The corresponding result for $\nu_t^{(n)}$ converging to $\nu_t^{(0)}$ can be proved in the same way.
\end{proof}

\section{Dynamic programming principle}

In this section, we go to establish the dynamic programming principle for the optimal control problem associated with the value function $V(s,x,i)$.

Let us begin with the discussion on the continuity of the value function after introducing some necessary notations. For $(s,x,i)\in [0,T]\!\times \!\R^d\!\times\!\S$, define the set of optimal controls by
\begin{equation}\label{k-1}
\Pi_{s,x,i}^0=\big\{\alpha\in  \Pi_{s,x,i};\, J(s,x,i,\alpha)=  V(s,x,i)\big\}.
\end{equation}
Similarly, define
\begin{equation}\label{RR1}
\mathcal{R}_{s,x,i}^0=\big\{R=\p\circ\Psi_\alpha^{-1}; \alpha\in \Pi_{s,x,i}\ \text{is optimal}\big\}.
\end{equation}

As a product space, $\mathcal{Y}$ is a Polish space. Then $\pb(\mathcal Y)$ is also a Polish space endowed with $L^1$-Wasserstein distance $W_1$, which is defined as follows: for any $\tilde R_1$ and $\tilde R_2$ in $\pb(\mathcal Y)$, define
\[W_{1,\mathcal Y}(\tilde R_1,\tilde R_2)=\inf_{\Gamma\in \C(\tilde R_1,\tilde R_2)} \Big\{ \int_{\mathcal Y\times \mathcal Y}\rho\big((x_\cdot,\La_\cdot,\mu_\cdot,\nu_\cdot),
(x'_\cdot,\La'_\cdot,\mu'_\cdot,\nu'_\cdot)\big)\d \Gamma\Big\},\]
where
\[\rho\big((x_\cdot,\La_\cdot,\mu_\cdot,\nu_\cdot),
(x'_\cdot,\La'_\cdot,\mu'_\cdot,\nu'_\cdot)\big)=\|x_\cdot-x'_\cdot\|_{\infty}\!+\!  d(\La_\cdot,\La'_\cdot)\!+\!W_1(\bar\mu,\bar\mu')+W_1(\bar\nu,\bar\nu'),\]
where
$\|\cdot\|_\infty$ is the uniform norm on $C([0,T];\R^d)$, $d(\La_\cdot,\La'_\cdot)$ is the metric on $\D([0,T];\S)$ which makes it to be a Polish space (see \cite{Bill} for concrete construction), $\bar\mu$ is in $\pb([0,T]\times U)$ corresponding to $\mu_\cdot\in \mathscr U$ and $W_1$ is the associated $L^1$-Wasserstein distance on it.
As a subset of $\pb(\mathcal Y)$, $\mathcal R^0_{s,x,i}$ is closed under the metric $W_{1,\mathcal Y}$ when the cost functions $f$ and $g$ in \eqref{l-4} are lower semicontinuous. Analogous to the argument of Theorem \ref{t1}, we can show that $\mathcal R^0_{s,x,i}$ is tight. By Prohorov's theorem, $\mathcal R^0_{s,x,i}$ is a compact set in $\pb(\mathcal Y)$.
Moreover, we can rewrite the value function in the form
\begin{align*}
V(s,x,i)&=\inf_{R\in \mathcal{R}^0_{s,x,i}} J(s,x,i,R)\\
  &=\inf_{R\in \mathcal{R}^0_{s,x,i}}\E_R \Big[\int_0^T f(t,X_t,\La_t,\mu_t,\nu_t)\d t+g(X_T)\Big].
\end{align*}

We shall use the idea of Bertsekas and Shreve \cite[Proposition 7.32]{BS} to investigate the continuous property of the value function.

\begin{mythm}\label{t3}
$\mathrm{(a)}$\ Assume that $f:[0,T]\times \R^d\times \S\times \pb(U)\times \pb(U)\to \R$, and $g:\R^d\to \R$ are lower semicontinuous, then the value function $V$ is lower semicontinuous.

$\mathrm{(b)}$\ Assume that $f:[0,T]\times \R^d\times \S\times \pb(U)\times \pb(U)\to \R$, and $g:\R^d\to \R$ are continuous, then the value function $V$ is continuous.
\end{mythm}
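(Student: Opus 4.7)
Since $\S$ is a finite discrete space, any sequence $(s_n, x_n, i_n) \to (s_0, x_0, i_0)$ has $i_n = i_0 =: i$ for all large $n$, so the task reduces to showing the stated semicontinuity in the variable $(s, x)$ with $i$ fixed. Both parts will proceed via the compactification argument already developed in Theorem \ref{t1}, combined with a feedback-transport argument in the spirit of Bertsekas--Shreve \cite{BS}.

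\textbf{Part (a).} Take $(s_n, x_n) \to (s_0, x_0)$ and set $c := \liminf_n V(s_n, x_n, i)$; I may assume $c < \infty$ (otherwise there is nothing to prove). After passing to a subsequence along which $V(s_{n_k}, x_{n_k}, i) \to c$, Theorem \ref{t1} supplies optimal $\alpha_{n_k} \in \Pi_{s_{n_k}, x_{n_k}, i}$ with associated laws $R_{n_k} \in \pb(\mathcal{Y})$. The plan is to rerun Steps 1--2 of the proof of Theorem \ref{t1} essentially verbatim: the moment bound for $X^{n_k}$ only uses the boundedness of $\{|x_{n_k}|\}$, tightness of $\Lambda^{n_k}$ rests solely on (H2), and $\mu^{n_k}, \nu^{n_k}$ live in the compact set $\mathscr{U}$; thus $(R_{n_k})$ is tight and a Skorokhod representation identifies a weak limit $R_0$ with some admissible control. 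The new input is that the limiting initial condition is $(s_0, x_0)$: since $X^{n_k}_r = x_{n_k}$ for $r \leq s_{n_k}$ and $s_{n_k} \to s_0$, uniform convergence on $\mathcal{C}([0,T];\R^d)$ forces $X^0_{s_0} = x_0$, and passing to the limit in the Skorokhod representation \eqref{l-7} yields $\Lambda^0_{s_0} = i$. Hence $\alpha_0 \in \Pi_{s_0, x_0, i}$, and pointwise lower semicontinuity of $f,g$ together with Fatou's lemma gives
\begin{equation*}
  V(s_0, x_0, i) \leq J(s_0, x_0, i, \alpha_0) \leq \liminf_{k \to \infty} J(s_{n_k}, x_{n_k}, i, \alpha_{n_k}) = c.
\end{equation*}

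\textbf{Part (b).} The lsc direction already follows from (a); only upper semicontinuity remains. Fix an optimal $\alpha^* = (\mu^*, \nu^*) \in \Pi_{s_0, x_0, i}$ with controlled process $(X^*, \Lambda^*)$ afforded by Theorem \ref{t1}. By Lemma \ref{app-2}, write $\mu^*_t = F_t(X^*, \Lambda^*)$ and $\nu^*_t = G_t(X^*, \Lambda^*)$ for measurable feedback maps $F, G$ on path space. On the same probability space, solve \eqref{l-1} and \eqref{l-7} on $[s_n, T]$ starting from $(x_n, i)$ with the feedback $(F, G)$ applied to the path extended by the map $\Xi$ defined in \eqref{ext-1}, driven by the same Brownian motion $B$ and Poisson random measure $N_1$. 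Using (H1) and (H3) along with standard Gronwall-type estimates, conclude that $(X^n, \Lambda^n) \to (X^*, \Lambda^*)$ in probability, uniformly on $[s_n \vee s_0, T]$; combined with continuity of $f, g$, the uniform linear growth of $X^n$, and dominated convergence, this yields $J(s_n, x_n, i, \alpha_n) \to J(s_0, x_0, i, \alpha^*) = V(s_0, x_0, i)$, whence $\limsup_n V(s_n, x_n, i) \leq V(s_0, x_0, i)$.

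\textbf{Main obstacle.} The delicate step is the stability assertion in part (b): the feedback $F$ produced by Lemma \ref{app-2} is only measurable in the path, so inserting it into the SDE at a perturbed initial condition gives path-dependent coefficients that may lack the regularity needed for classical stability arguments. I would address this by first mollifying $F$ and $G$ on path space (e.g., via convolution against a smooth kernel, exploiting the $W_1$-Lipschitz structure in (H1) and (H3)) to obtain continuous feedbacks whose associated SDEs are stable under initial-condition perturbations, using lsc from (a) to control the cost loss incurred by the approximation, and finally letting the mollification parameter tend to zero. An alternative route is to work at the level of the correspondence $(s, x, i) \mapsto \mathcal{R}_{s, x, i}$ in $\pb(\mathcal{Y})$, establishing its lower hemicontinuity and reducing the statement to a Berge-type maximum theorem.
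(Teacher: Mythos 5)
Your part (a) is essentially the paper's own proof: pass to a subsequence realizing the $\liminf$, take the optimal laws $R_{n_k}\in\mathcal R^0_{s_{n_k},x_{n_k},i}$ supplied by Theorem \ref{t1}, rerun the tightness and identification steps of that theorem to obtain a limit $R_0\in\mathcal R_{s_0,x_0,i}$, and conclude by lower semicontinuity of $f,g$ together with Fatou (which needs the standing lower bound on $f,g$ carried over from Theorem \ref{t1}). This part is correct.

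Part (b) contains a genuine gap, and it is exactly the one you label the ``main obstacle'': it is not a technical nuisance but the entire content of upper semicontinuity in a feedback setting. The maps $F_t,G_t$ furnished by Lemma \ref{app-2} are only measurable functionals of the path. To run a Gronwall/stability estimate you must compare $\mu^n_t=F_t(X^n,\La^n)$ with $\mu^*_t=F_t(X^*,\La^*)$, which requires some modulus of continuity of $F_t$ in the path variable; measurability gives none, and the argument is circular --- closeness of the controls needs closeness of the paths, which needs closeness of the controls. Neither proposed repair closes this circle. Mollification yields a continuous but not Lipschitz feedback, still insufficient for Gronwall; and you must also show that the mollified control's cost at $(s_0,x_0,i)$ exceeds $V(s_0,x_0,i)$ by at most $\veps$, which is again a stability statement for the SDE under perturbation of a merely measurable feedback --- the lower semicontinuity from part (a) gives an inequality in the wrong direction for that purpose. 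The Berge-maximum-theorem route merely renames the gap: upper semicontinuity of an infimum via Berge requires lower hemicontinuity of $(s,x,i)\mapsto\mathcal R_{s,x,i}$, i.e. that every admissible law at $(s_0,x_0,i)$ be approximable by admissible laws at $(s_n,x_n,i)$, which is precisely the unproved stability. The paper sidesteps all of this: it extends $J$ by $+\infty$ off the admissible sets, notes that $\{V<c\}=\mathrm{proj}_Z\{J<c\}$ because the infimum is attained (Theorem \ref{t1}), and invokes upper semicontinuity of $J$ on $Z\times\mathcal R^0$ together with the fact that projections of open subsets of a product are open, following \cite[Proposition 7.32]{BS}. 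No stability of the controlled SDE in the initial condition is ever needed; you should replace your part (b) by this projection argument.
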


\begin{proof}
  $\mathrm{(a)}$\ To show $V$ is lower semicontinuous, let $(s_n,x_n,i)\in [0,T]\times \R^d\times \S$ be such that
  $(s_n,x_n,i)\to (s_0,x_0,i)$. According to Theorem \ref{t1}, there exists a sequence of probability measures $R_n\in \mathcal{R}_{s_n,x_n,i}^0$ such that
  \begin{align*}
    V(s_n,x_n,i)=J(s_n,x_n,i,R_n)=\E_{R_n}\Big[\int_{s_n}^T f(t,X_t,\La_t,\mu_t,\nu_t)\d t+g(X_T)\Big],
  \end{align*} for $n\geq 1$.
  There exists a subsequence of $R_n$, call it $R_{n_k}\in \mathcal{R}_{s_{n_k},x_{n_k},i}$, such that
  \begin{align*}
    \liminf_{n\to \infty} V(s_n,x_n,i)=\lim_{k\to \infty} V(s_{n_k},x_{n_k},i)=\lim_{k\to \infty} J(s_{n_k},x_{n_k},i, R_{n_k}).
  \end{align*}
  Following the same procedure as in the argument of Theorem \ref{t1}, we can show that $(R_{n_k})_{k\geq 1}$ is tight. Hence there exists some $R_0\in \mathcal{R}_{s_0,x_0,i}$, at which the sequence $(R_{n_k})_{k\geq 1}$ accumulates.
  By the lower semicontinuity of $f$ and $g$, we have
  \begin{align*}
    \liminf_{n\to \infty} V(s_n,x_n,i)&=\lim_{k\to \infty} J(s_{n_k},x_{n_k},i, R_{n_k})\\
    &=\lim_{k\to \infty} \E_{R_{n_k}}\Big[\int_{s_{n_k}}^T f(t,X_t,\La_t,\mu_t,\nu_t)\d t+g(X_T)\Big]\\
    &\geq \E_{R_0}\Big[\int_{s_0}^T f(t,X_t,\La_t,\mu_t,\nu_t)\d t+g(X_T)\Big]\\
    &\geq V(s_0,x_0,i).
  \end{align*}
  So the value function is lower semicontinuous.

$\mathrm{(b)}$\ We only need to show $V$ is upper semicontinuous in this situation due to (a). Denote by $Z=[0,T]\times \R^d\times \S$ to
  simplify the notation. Let
  \[\mathcal{R}^0=\bigcup_{(s,x,i)\in Z} \mathcal{R}_{s,x,i}^0,\]
  and endow it with the metric $W_{1,\mathcal{Y}}$. The function $J$ can be extended naturally to $\mathcal{R}^0$ by putting
  \[J(s,x,i,R)=+\infty,\quad \text{if $R\not\in \mathcal{R}^0_{s,x,i}$}.\]
  Let $\mathrm{proj}_Z(\,\cdot\,)$ be the projection map from $Z\times\mathcal{R}^0\to Z$. It is obvious that $\mathrm{proj}_Z(G)$ is open if $G\subset Z\times \mathcal{R}^0$ is open. In addition, by the definition of $V$, for $c\in\R$, it holds
  \begin{align*}
    \big\{(s,x,i)\in Z;\ V(s,x,i)<c\big\}=\mathrm{proj}_Z\big\{(s,x,i,R)\in Z\times \mathcal{R}^0; J(s,x,i, R)<c\big\}.
  \end{align*}
  The upper semicontinuity of $f$ and $g$ yields the upper semicontinuity of the function $J$, and hence
  $\big\{(s,x,i,R)\in Z\times \mathcal{R}^0; J(s,x,i, R)<c\big\}$ is open, so
  $\big\{(s,x,i)\in Z;\ V(s,x,i)<c\big\}$ is open, which yields that $V$ is upper semicontinuous immediately.
\end{proof}

Applying \cite[Proposition 7.33]{BS}, a measurable selection theorem over general metric spaces, we can establish the following selection theorem in the current situation.
\begin{mylem}[measurable selection theorem]\label{lem-sel}
Assume that $f:[0,T]\times \R^d\times \S\times \pb(U)\times \pb(U)\to \R$, and $g:\R^d\to \R$ are lower semicontinuous, then there exists a measurable function $H:[0,T]\times\R^d\times \S\to \mathcal{R}^0$ such that
\[V(s,x,i)=J(s,x,i,H(s,x,i)).\]
\end{mylem}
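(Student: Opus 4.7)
The plan is to invoke \cite[Proposition 7.33]{BS} directly, viewed as a measurable minimum-selection theorem for lower semicontinuous (hence lower semianalytic) cost functions on a Borel-measurable graph. Let $Z=[0,T]\times\R^d\times\S$ and consider the correspondence $\Phi:Z\twoheadrightarrow\pb(\mathcal Y)$ given by $\Phi(s,x,i)=\mathcal R_{s,x,i}$, the set of laws induced by admissible controls started from $(s,x,i)$. Define $\tilde J:Z\times\pb(\mathcal Y)\to\R\cup\{+\infty\}$ by
\[
\tilde J(s,x,i,R)=\begin{cases}\E_R\big[\int_s^T f(t,X_t,\La_t,\mu_t,\nu_t)\d t+g(X_T)\big],& R\in\Phi(s,x,i),\\ +\infty,& \text{otherwise.}\end{cases}
\]
Then $V(s,x,i)=\inf_{R\in\pb(\mathcal Y)}\tilde J(s,x,i,R)$, and an optimal selector $H$ exists if we can produce a measurable $H$ minimizing $\tilde J(s,x,i,\cdot)$ pointwise.

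The first substantive step is to show that the graph $\mathrm{Gr}(\Phi)=\{(s,x,i,R):R\in\Phi(s,x,i)\}$ is a Borel subset of $Z\times\pb(\mathcal Y)$. I would do this by the same compactification argument used in Step 2 of the proof of Theorem \ref{t1}: if $(s_n,x_n,i_n,R_n)\to(s_0,x_0,i_0,R_0)$ with $R_n\in\Phi(s_n,x_n,i_n)$, then the characterizations \eqref{m-8} and \eqref{m-10} pass to the limit under the Skorokhod coupling, so $R_0\in\Phi(s_0,x_0,i_0)$; this yields that $\mathrm{Gr}(\Phi)$ is closed, hence Borel. The same continuity argument, combined with the lower semicontinuity of $f$ and $g$ and Fatou's lemma applied to the pathwise integrals (recall that $|X_t^{(n)}|$ has uniform fourth-moment bounds by \eqref{m-3}, and $f,g$ are bounded below), shows that $\tilde J$ is lower semicontinuous on $Z\times\pb(\mathcal Y)$.

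Having established the lower semicontinuity of $\tilde J$ on the Borel graph, together with the fact that $\Phi(s,x,i)$ is compact (by the tightness argument preceding Theorem \ref{t3} and Prohorov's theorem) so that the infimum $V(s,x,i)$ is attained, I would apply \cite[Proposition 7.33]{BS} to produce a universally measurable (in particular, Borel after redefinition on a null set) selector $H:Z\to\pb(\mathcal Y)$ satisfying $H(s,x,i)\in\mathcal R^0_{s,x,i}$ and $\tilde J(s,x,i,H(s,x,i))=V(s,x,i)$. Since $H(s,x,i)\in\mathcal R^0_{s,x,i}\subset\mathcal R^0$ by construction, this is the desired measurable selection into the set of optimal control laws.

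The main obstacle I anticipate is rigorously establishing the Borel measurability (equivalently, closedness) of the graph $\mathrm{Gr}(\Phi)$ in a way that is consistent with the metric $W_{1,\mathcal Y}$ on $\pb(\mathcal Y)$; in particular, one must verify that the weak convergence on $\pb(\mathcal Y)$ interacts correctly with the coupling between the initial datum $(s,x,i)$ and the driving noises $(B_t)$ and $N_1$, which in the proof of Theorem \ref{t1} was handled pathwise via Skorokhod representation. Once this is in place, the lower semicontinuity of $\tilde J$ and the applicability of \cite[Proposition 7.33]{BS} are essentially automatic.
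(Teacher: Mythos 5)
Your proposal follows essentially the same route as the paper: the paper's proof is a one-line invocation of Bertsekas--Shreve Proposition 7.33 with $X=[0,T]\times\R^d\times\S$, $Y=\mathcal{R}^0$, and the extended cost $J$ set to $+\infty$ off the admissible graph, relying on the compactness/closedness and lower semicontinuity facts recorded around Theorem \ref{t3}. You apply the same proposition, merely spelling out the verifications (closed graph via the Step 2 compactification argument, lower semicontinuity of the extended cost, compact sections) that the paper leaves implicit, so the two arguments coincide in substance.
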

\begin{proof}
  We can get the desired conclusion from \cite[Proposition 7.33]{BS} by taking the  spaces $X$ and $Y$ there as $[0,T]\!\times\!\R^d\!\times\!\S$ and $\mathcal{R}^0$ respectively, and applying Theorem \ref{t3}(a).
\end{proof}

To proceed, we adopt the method and notations of \cite{HS95b} and Stroock and Varadhan \cite{SV} to establish the dynamic programming principle.
According to \cite[Lemma 3.3, Corollary 3.9]{HS95b}, under the help of the selection theorem established in Lemma \ref{lem-sel}, the following result holds.
%We use the method of \cite{HS95b}, which has been  used in \cite{H86,HL90} as well. This method does not require any regularity of the value function as stated in \cite[Section 3]{HS95b}.  Let us recall the following lemma, which is  established in \cite{HS95b}.
%We still adopt the notation of \cite{HS95b}.

\begin{mylem}\label{lem-p}
For every $R\in \mathcal R_{s,x,i}$, $s<t\leq T$, there exists a unique probability measure on $\mathcal Y$, denoted by $R\otimes_t Q$, such that
\begin{itemize}
  \item[$(1)$] $R\otimes_t Q(A)=R(A)$, $\forall\, A\in \wt{\mathcal{Y}}_t$.
  \item[$(2)$] The regular conditional probability distribution of $R\otimes_t Q$ with respect to $\wt{\mathcal{Y}}_t$ is $Q_{t}$, where
      $Q_t=H(t,X_t,\La_t)$,
      $H(s,x,i)\in \mathcal{R}^0_{s,x,i}$, and $H:[0,T]\times \R^d\times \S\ra \pb(\mathcal Y)$ is Borel measurable.
  \item[$(3)$] $R\otimes_t Q$ is the distribution of the process $(X_\cdot,\La_\cdot,\mu_\cdot,\nu_\cdot)$ associated with some $\alpha=(\mu_\cdot,\nu_\cdot)\in \Pi_{s,x,i}$.
\end{itemize}
\end{mylem}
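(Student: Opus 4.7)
The plan is to build $R\otimes_t Q$ by Stroock--Varadhan style concatenation of probability measures on the canonical space $\mathcal{Y}$, using the measurable selector $H$ from Lemma \ref{lem-sel} to supply the kernel governing the post-$t$ evolution. Since $(X_t,\Lambda_t)$ is $\widetilde{\mathcal{Y}}_t$-measurable and $H:[0,T]\times\R^d\times\S\to\pb(\mathcal{Y})$ is Borel with $H(t,x,i)\in\mathcal{R}^0_{t,x,i}$, the composition $\omega\mapsto Q_t(\omega):=H(t,X_t(\omega),\Lambda_t(\omega))$ is a $\widetilde{\mathcal{Y}}_t$-measurable kernel into $\pb(\mathcal{Y})$. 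I would then define
\[
(R\otimes_t Q)(A)\;=\;\int_{\mathcal{Y}}\!\!\int_{\mathcal{Y}}\mathbf{1}_A\!\bigl(\omega|_{[0,t]}\sqcup\omega'|_{[t,T]}\bigr)\,Q_t(\omega)(\d\omega')\,R(\d\omega),\qquad A\in\widetilde{\mathcal{Y}},
\]
where $\sqcup$ denotes concatenation of paths; this is well-defined because $Q_t(\omega)\in\mathcal{R}^0_{t,X_t(\omega),\Lambda_t(\omega)}$ forces the glued path to match at time $t$. Uniqueness comes from the fact that any probability measure on $\mathcal{Y}$ satisfying (1) and (2) is pinned down on the $\pi$-system of sets $A\cap B$ with $A\in\widetilde{\mathcal{Y}}_t$ and $B$ measurable with respect to the post-$t$ coordinates, and such sets generate $\widetilde{\mathcal{Y}}$.

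Properties (1) and (2) follow by inspection of the construction: taking $A\in\widetilde{\mathcal{Y}}_t$ gives $(R\otimes_t Q)(A)=R(A)$ since $Q_t(\omega)$ is a probability measure, while $Q_t$ is exhibited explicitly as the regular conditional distribution given $\widetilde{\mathcal{Y}}_t$.

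The main obstacle is property (3): realising $R\otimes_t Q$ as the law of the process associated with a single admissible feedback control $\alpha=(\mu_\cdot,\nu_\cdot)\in\Pi_{s,x,i}$. The candidate controls $\mu_\cdot,\nu_\cdot$ are taken to be the projection of the canonical path onto the last two coordinates, and adaptedness to $\mathcal{F}_r=\overline{\sigma\{(X_u,\Lambda_u);\,s\leq u\leq r\}}$ must be verified piecewise: for $r\leq t$ it is inherited from the admissibility of $R$, while for $r\in[t,T]$ it follows from the admissibility of $Q_t(\omega)\in\mathcal{R}^0_{t,X_t(\omega),\Lambda_t(\omega)}$ together with the observation that $(X_t,\Lambda_t)$ is $\mathcal{F}_t$-measurable, so $\sigma\{(X_u,\Lambda_u);\,t\leq u\leq r\}\subset\mathcal{F}_r$. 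Next, one verifies that under $R\otimes_t Q$ the pair $(X_\cdot,\Lambda_\cdot)$ satisfies \eqref{l-1}--\eqref{l-2}: reformulating these as a martingale problem, the equations hold on $[s,t]$ because $R$ solves the problem, and conditionally on $[t,T]$ because $Q_t$ does, whence the tower property delivers the full martingale problem on $[s,T]$. The subtle step is the splice at time $t$, which is handled exactly as in \cite[Lemma 3.3, Corollary 3.9]{HS95b} and Stroock--Varadhan \cite{SV}: the concatenated finite-dimensional distributions agree with those produced by a single Brownian motion and Poisson random measure defined on a suitably enlarged probability space. Finally, hypotheses (H1)--(H3) upgrade the resulting weak solution to a strong one driven by some $(B_\cdot,N_1)$, producing the admissible feedback control in $\Pi_{s,x,i}$ required for (3).
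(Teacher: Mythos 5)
Your construction is exactly the Stroock--Varadhan/Haussmann--Suo concatenation that the paper itself relies on: the paper offers no proof of this lemma beyond citing \cite[Lemma 3.3, Corollary 3.9]{HS95b} together with the measurable selection result of Lemma \ref{lem-sel}, so your proposal supplies precisely the argument the paper delegates to those references (kernel $Q_t=H(t,X_t,\La_t)$, path concatenation, uniqueness by disintegration, martingale-problem splicing for property $(3)$). The one step I would phrase more carefully is the closing weak-to-strong upgrade: since the feedback law is only measurable in the path, (H1)--(H3) do not yield pathwise uniqueness for the closed-loop equation, so it is cleaner to paste the strong solutions themselves, using measurability of the selector $H$ in the initial datum, rather than to invoke a Yamada--Watanabe type upgrade of the concatenated weak solution.
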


\begin{mythm}\label{t4}
Assume that (H1)-(H4) hold, and $f:[0,T]\times \R^d\times \S\times \pb(U)\times \pb(U)\to \R$, and $g:\R^d\to \R$ are lower semicontinuous. Then for $0\leq s<t\leq T$,
\begin{equation}\label{k-3}
V(s,x,i)=\inf\Big\{ \E\Big[\int_s^t f(r,X_r,\La_r,\mu_r,\nu_r)\d r+V(t,X_t,\La_t)\Big];\ \alpha\in  \Pi_{s,x,i}\Big\}.
\end{equation}
\end{mythm}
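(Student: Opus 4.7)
The plan is to establish the two inequalities $V(s,x,i)\le\inf\{\cdots\}$ and $V(s,x,i)\ge\inf\{\cdots\}$ separately, exploiting the measurable selection Lemma \ref{lem-sel} and the concatenation construction Lemma \ref{lem-p} proven just above, in the spirit of Stroock--Varadhan.

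For the upper bound, fix any $\alpha\in\Pi_{s,x,i}$ with law $R\in\mathcal R_{s,x,i}$ on $\mathcal Y$, and let $H:[0,T]\times\R^d\times\S\to\pb(\mathcal Y)$ be the Borel optimal selector from Lemma \ref{lem-sel}, so that $H(r,y,j)\in\mathcal R^0_{r,y,j}$. Form the concatenated measure $R\otimes_t Q$ with $Q=H(t,X_t,\La_t)$ provided by Lemma \ref{lem-p}. By part $(3)$ of that lemma, $R\otimes_t Q$ is the law of some $\tilde\alpha\in\Pi_{s,x,i}$, so
\[V(s,x,i)\le J(s,x,i,\tilde\alpha)=\E_{R\otimes_t Q}\Big[\int_s^t f(r,X_r,\La_r,\mu_r,\nu_r)\d r+\int_t^T f\,\d r+g(X_T)\Big].\]
Property $(1)$ of Lemma \ref{lem-p} lets me replace $R\otimes_t Q$ by $R$ in the first integral, since its integrand is $\wt{\mathcal Y}_t$-measurable; property $(2)$ together with the optimality of $H(t,X_t,\La_t)$ reduces the conditional expectation of the tail given $\wt{\mathcal Y}_t$ to exactly $V(t,X_t,\La_t)$. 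Taking the infimum over $\alpha\in\Pi_{s,x,i}$ yields the upper bound.

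For the lower bound, fix $\alpha\in\Pi_{s,x,i}$ with law $R$ and split
\[J(s,x,i,\alpha)=\E_R\Big[\int_s^t f(r,X_r,\La_r,\mu_r,\nu_r)\d r\Big]+\E_R\Big[\int_t^T f\,\d r+g(X_T)\Big].\]
Let $R^{(t)}(\omega)$ denote a regular conditional distribution of $R$ given $\wt{\mathcal Y}_t$. The pivotal step is to argue that, for $R$-a.e.\ $\omega$, $R^{(t)}(\omega)$ --- viewed as describing the trajectory from time $t$ onward with initial data $(X_t(\omega),\La_t(\omega))$ --- belongs to $\mathcal R_{t,X_t(\omega),\La_t(\omega)}$. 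Granted this, one has $\E_{R^{(t)}(\omega)}[\int_t^T f\,\d r+g(X_T)]\ge V(t,X_t(\omega),\La_t(\omega))$ for $R$-a.e.\ $\omega$, and integration gives
\[J(s,x,i,\alpha)\ge \E_R\Big[\int_s^t f\,\d r+V(t,X_t,\La_t)\Big]\ge\inf\{\cdots\},\]
and taking the infimum over $\alpha$ completes this direction.

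The main obstacle is precisely the claim that $R^{(t)}(\omega)$ represents an admissible feedback control starting from $(X_t(\omega),\La_t(\omega))$. This is the Markov-type property for the controlled hybrid system: after time $t$, \eqref{l-1} and \eqref{l-7} are driven by $(B_{t+\cdot}-B_t)$ and the restriction of $N_1$ to $(t,T]\times[0,H]$, which are independent of $\wt{\mathcal Y}_t$, while condition $2^\circ$ of Definition \ref{def-1} applied on $[t,T]$ ensures that $\mu_\cdot,\nu_\cdot$ remain adapted to $\sigma\{(X_u,\La_u);t\le u\le\cdot\}$; combined with strong uniqueness granted by (H1)--(H3), this yields $R^{(t)}(\omega)\in\mathcal R_{t,X_t(\omega),\La_t(\omega)}$. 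A subsidiary point is that the lower-semicontinuous $V$ from Theorem \ref{t3}(a) is Borel, so $V(t,X_t,\La_t)$ is a bona fide random variable and both sides of \eqref{k-3} are well defined.
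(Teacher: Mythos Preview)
Your upper bound argument is exactly the paper's: concatenate an arbitrary $\alpha\in\Pi_{s,x,i}$ with the optimal selector $H(t,X_t,\La_t)$ via Lemma~\ref{lem-p}, then read off the inequality from properties (1)--(3).

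For the lower bound, your route differs from the paper's. You take an arbitrary $\alpha$, form the regular conditional distribution $R^{(t)}(\omega)$, and argue it lies in $\mathcal R_{t,X_t(\omega),\La_t(\omega)}$, whence $J(s,x,i,\alpha)\ge \E[\int_s^t f\,\d r+V(t,X_t,\La_t)]$; then you take the infimum over $\alpha$. The paper instead invokes Theorem~\ref{t1} to pick an optimal $\alpha^\ast\in\Pi_{s,x,i}$ and writes directly
\[
V(s,x,i)=J(s,x,i,\alpha^\ast)=\E\Big[\int_s^t f\,\d r+\int_t^T f\,\d r+g(X_T^\ast)\Big]\ge \E\Big[\int_s^t f\,\d r+V(t,X_t^\ast,\La_t^\ast)\Big]\ge\inf\{\cdots\},
\]
avoiding the conditional-distribution machinery altogether. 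Both arguments rest on the same underlying fact---that the restriction of an admissible control to $[t,T]$ is admissible from $(t,X_t,\La_t)$---which the paper leaves implicit and you make explicit. Your justification of this point is essentially right, but note that condition~$2^\circ$ of Definition~\ref{def-1} only gives $\mu_r\in\overline{\sigma\{(X_u,\La_u);s\le u\le r\}}$, not $\overline{\sigma\{(X_u,\La_u);t\le u\le r\}}$; the correct reason is that under $R^{(t)}(\omega)$ the segment $(X_{[s,t]},\La_{[s,t]})$ is frozen, so $\mu_r=F_r(X_{[s,r]},\La_{[s,r]})$ becomes a measurable function of $(X_{[t,r]},\La_{[t,r]})$ alone. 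With that correction your argument is sound; the paper's version is simply shorter because the existence of $\alpha^\ast$ makes the outer infimum unnecessary.
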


\begin{proof}
Let $\alpha\in \Pi_{s,x,i}$ and denote by $R$ the distribution of $(X_\cdot,\La_\cdot,\mu_\cdot,\nu_\cdot)$ in $\mathcal Y$ associated with $\alpha$.
%By the definition of $\wt{\mathcal Y}_t$, $\tau$ as an $\F_t$-stopping is also a $\wt{\mathcal Y}_t$-stopping.
By Lemma \ref{lem-p}, there exists an $\tilde\alpha=(\tilde \mu_\cdot,\tilde \nu_\cdot,s,x,i)\in \Pi_{s,x,i}$  associated with $R\otimes_t Q$. Then,
\begin{align*}
  &V(s,x,i)\\
  &\leq \E  \Big[\int_s^T f(r,\wt X_r,\tilde \La_r,\tilde\mu_r,\tilde\nu_r) \d r+g(\wt X_T)\Big]\\
  &=\E \Big[\int_s^t f(r,\wt X_r,\tilde \La_r,\tilde\mu_r,\tilde\nu_r) \d r+\int_t^T f(r,\wt X_r,\tilde \La_r,\tilde\mu_r,\tilde\nu_r)  \d r+g(\wt X_T)\Big]\\
  &=\E \Big[\int_s^t \!\!f(r,X_r, \La_r,\mu_r,\nu_r) \d r+\E \Big[\int_t^T \!\!\!f(r,\wt X_r,\tilde \La_r,\tilde\mu_r,\tilde\nu_r) \d r+g(\wt X_T)\Big|\F_{t}\Big]\Big]\\
  &=\E \Big[\int_s^t f(r, X_r, \La_r,\mu_r,\nu_r) \d r+V(\tau,X_\tau,\La_\tau)\Big].
\end{align*}
In the second equality of the previous equation, we have used that before $t$, $\tilde \alpha$ coincides with $\alpha$, and after $t$,  coincides with the measurable selector $H(t,X_t,\La_t)$.
The arbitrariness of $\alpha\in\Pi_{s,x,i}$ yields that
\begin{equation}\label{k-4}
V(s,x,i)
\leq \inf\Big\{ \E\Big[\int_s^t f(r,X_r,\La_r,\mu_r,\nu_r)\d r+V(t,X_t,\La_t)\Big];\ \alpha\in \Pi_{s,x,i}\Big\}.%=(\mu_\cdot,\nu_\cdot,s,x,i)
\end{equation}

 On the other hand, by Theorem \ref{t1}, there exists an optimal admissible control $\alpha^\ast=(\mu_\cdot^\ast,\nu_\cdot^\ast,s,x,i)\in \Pi_{s,x,i}$. Denote by $(X_\cdot^\ast,\La_\cdot^\ast)$ the processes associated with $\alpha^\ast$. Then,
 \begin{equation}\label{k-5}
 \begin{split}
V(s,x,i)&
   =\E\Big[\int_s^T\!f(t,X_t^\ast,\La_t^\ast,\mu_t^\ast,\nu_t^\ast)\d t+g(X_T^\ast)\Big]\\
   &=\E\Big[\int_s^t\!f(r,X_r^\ast,\La_r^\ast,\mu_r^\ast,\nu_r^\ast)\d r+\int_t^T\!\!f(r,X_r^\ast,\La_r^\ast,\mu_r^\ast,\nu_r^\ast)\d r +g(X_T^\ast)\Big]\\
   &\geq \E\Big[\int_s^t\!f(r,X_r^\ast,\La_r^\ast,\mu_r^\ast,\nu_r^\ast)\d r+V(t,X_t^\ast,\La_t^\ast)\Big]\\
   &\geq \inf\Big\{ \E\Big[\int_s^t f(r,X_r,\La_r,\mu_r,\nu_r)\d r+V(t,X_t,\La_t)\Big];\ \alpha\in  \Pi_{s,x,i}\Big\}.
 \end{split}
 \end{equation}
 Consequently,   the dynamic programming principle \eqref{k-3} has been established following from \eqref{k-4} and \eqref{k-5}.
\end{proof}

\section*{Appendix}

\begin{mylem}\label{app-2}
  Let $Z$ be a set, $(E,\mathscr{E})$ a measurable space, and $\xi:Z\to E$ a map. Let $\sigma(\xi)=\xi^{-1}(\mathscr{E})$. Then $\varphi:Z\to \R$ is a $\sigma(\xi)$-measurable function if and only if there exists a measurable function $h:E\to \R$ such that $\varphi=h\circ \xi$.
\end{mylem}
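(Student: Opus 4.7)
The statement is the classical Doob--Dynkin factorization lemma, and the plan is to prove it by the standard machine.

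The ``if'' direction is immediate: if $\varphi=h\circ\xi$ with $h:E\to\R$ measurable, then for any Borel set $B\subset \R$ we have $\varphi^{-1}(B)=\xi^{-1}(h^{-1}(B))\in \xi^{-1}(\mathscr E)=\sigma(\xi)$, so $\varphi$ is $\sigma(\xi)$-measurable. All the work is in the converse.

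For the ``only if'' direction I would proceed in the usual four layers. First, for indicators: if $A\in \sigma(\xi)$, then by definition of $\sigma(\xi)$ there exists $B\in\mathscr E$ with $A=\xi^{-1}(B)$, and then $\mathbf 1_A = \mathbf 1_B\circ\xi$, so $h=\mathbf 1_B$ works. Second, by linearity the claim extends to any $\sigma(\xi)$-measurable simple function $\varphi=\sum_{k=1}^n c_k\mathbf 1_{A_k}$ via $h=\sum_{k=1}^n c_k\mathbf 1_{B_k}$. Third, for a nonnegative $\sigma(\xi)$-measurable $\varphi$, pick a nondecreasing sequence of $\sigma(\xi)$-measurable simple functions $\varphi_n\uparrow\varphi$ (for instance the standard dyadic truncations $\varphi_n=\sum_{k=0}^{n2^n-1}\frac{k}{2^n}\mathbf 1_{\{k/2^n\le \varphi<(k+1)/2^n\}}\!+n\,\mathbf 1_{\{\varphi\ge n\}}$) and write each $\varphi_n=h_n\circ\xi$ from the simple case. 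Finally, for a general $\sigma(\xi)$-measurable $\varphi$, decompose $\varphi=\varphi^+-\varphi^-$ and apply the nonnegative case to each piece.

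The one subtle point I need to watch in the third step is that the measurable functions $h_n$ on $E$ are only constrained on the image $\xi(Z)$, which need not even belong to $\mathscr E$, so convergence of $h_n$ is not automatic off this image. The standard fix is to set
\[h(y)=\begin{cases} \lim_{n\to\infty}h_n(y), & \text{if the limit exists in $\R$},\\ 0, & \text{otherwise.} \end{cases}\]
Equivalently, $h=\limsup_n h_n$ on $\{\limsup h_n=\liminf h_n\in\R\}$ and $0$ elsewhere; both $\limsup h_n$ and $\liminf h_n$ are $\mathscr E$-measurable, hence so is $h$. Now for every $z\in Z$, $h_n(\xi(z))=\varphi_n(z)\uparrow \varphi(z)\in\R$, so the limit exists and equals $\varphi(z)$, giving $h\circ \xi=\varphi$. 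This produces the required measurable $h$ and completes the proof. No obstacle beyond this image-versus-codomain issue is expected.
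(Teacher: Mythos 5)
Your proof is correct and follows essentially the same route as the paper: the paper verifies the hypotheses of the functional monotone class theorem (indicators of sets in $\sigma(\xi)$, linear combinations, monotone limits via $h=\sup_n h_n$), which is exactly your four-layer construction unpacked by hand. Your explicit handling of the behaviour of $h_n$ off the image $\xi(Z)$ (defining $h$ as the pointwise limit where it exists and $0$ elsewhere) is in fact slightly more careful than the paper's $h=\sup_n h_n$, which could be $+\infty$ off the image.
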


\begin{proof}
  The sufficiency is clear, we shall use the functional monotone class theorem to prove the necessity. To this end, let
  \[L=\{h\circ \xi;\ h\in \mathscr{E}\}.\]
  Here $h\in \mathscr{E}$ means that $h$ is measurable w.r.t. $\mathscr{E}$.
  Then $1=\mathbf{1}_E\circ \xi$, $\mathbf{1}_{E}\in \mathscr{E}$, and hence $1\in L$. It is easy to check that $L$ is closed for linear combination. If $\varphi_n\in L$, $0\leq \varphi_n\uparrow \varphi$, then there exist $h_n\in \mathscr{E}$ such that
  $\varphi_n=h_n\circ \xi$. Let $h=\sup_n h_n$. Then $h\in \mathscr{E}$ and
  $\varphi=h\circ \xi$. So $\varphi\in L$.
  At last, for every $C\in \sigma(\xi)$, there exists a $B\in \mathscr{E}$ such that $C=\xi^{-1}(B)$. So, $\mathbf{1}_C=\mathbf{1}_B\circ\xi$. Consequently, according to the functional monotone class theorem, $L$ contains all the $\sigma(\xi)$-measurable functions.
\end{proof}

\noindent\textbf{Acknowledgements.} The author is grateful to the editor and referees for their valuable suggestions on the first version of this paper, which improve the quality of this paper.


\begin{thebibliography}{17}

\bibitem{AGS} L. Ambrosio, N. Gigli, G. Savar\'e, \emph{Gradient flows in metric spaces and in the space of probability measures},  Lectures in Mathematics ETH Z\"rich. Birkh\"user Verlag, Basel, 2005.

\bibitem{BS16} J. Bao, J. Shao, Permance and extinction of regime-switching predator-prey models, SIAM J. Math. Anal. 48 (2016), 725-739.

\bibitem{BS} D. Bertsekas, S. Shreve, Stochastic optimal control: the discrete-time case, Math. in Sci. and Eng., Academic Press, 1978. %republished by  Athena Scientific,   1996.

\bibitem{Bill} P. Billinsley, \emph{Convergence of probability measures}, John Wiley, New York, 1968.

\bibitem{Bis} J.M. Bismut, Th\'eorie probabiliste du contr\"ole des diffusions, Mem. Amer. Math. Soc. 4, 1976.

%\bibitem{Bog} V. Bogachev, \emph{Measure theory}, Vol. II,  Springer-Verlag Berlin Heidelberg, 2007.

%\bibitem{Chen} M.-F. Chen, \emph{From Markov chains to non-equilibrium particle systems}, 2nd ed. Singapore: World Scientific, 2004.

%\bibitem{DK} J. Daleckij, M. Krein, Stability of solutions of differential equations in Banach space, Amer. Math. Soc. Transl. 43, 1974.

\bibitem{Dav} M.H.A. Davis, On the existence of optimal policies in stochastic control, SIAM J. Control Optim. 11 (1973), 587-594.

\bibitem{DDY} N.H. Du, H.N. Dang, G. Yin, Conditions for permanence and ergodicity of certain stochastic predator-prey models, J. Appl. Probab. 53 (2015), 187-202.

\bibitem{EK} S.N. Ethier, T.G. Kurtz, \emph{Markov processes characterization and convergence},  Wiley, New York, 1986.

%\bibitem{Feller} W. Feller, On the integro-differential equations of purely discontinuous Markoff processes. Trans. Amer. Math. Soc. 48 (1940), 488-515.


%\bibitem{FN} W.H. Fleming, M. Nisio, On the existence of optimal stochastic controls, J. Math. and Mechanics, 15 (1966), 777-794.

\bibitem{FR} W.H. Fleming, R.W. Rishel, \emph{Deterministic and stochastic optimal control}, Springer-Verlag Berlin Heidleberg New York, 1975.

\bibitem{Gho} M. Ghosh, A. Arapostathis, S. Marcus, Ergodic control of switching diffusions, SIAM J. Control Optim. 35 (1997), 1952--1988.

\bibitem{Guo07} X.P. Guo, Continuous-time Makrov decision processes with discounted rewards: the case of Polish spaces, Math. Oper. Res. 32 (2007), 73-87.

%\bibitem{GH03} X.P. Guo, O. Hern\'andez-Lerma, Continuous¨Ctime controlled Markov chains, Ann. Appl. Prob. 13 (2003), 363-388.

\bibitem{GH} X.P. Guo, O. Hern\'andez-Lerma, \emph{Continuous-time Markov decision processes}, Theory and applications, Springer-Verlag, Berlin, 2009.

\bibitem{GVZ} X.P. Guo, M. Vykertas, Y. Zhang, Absorbing continuous-time Markov decision processes with total cost criteria, Adv. Appl. Prob. 45 (2003), 490-519.

%\bibitem{Hau76} U.G. Haussmann, General necessary conditions for optimal control of stochastic systems, Math. Programming Study, 6 (1976), 30-48.

%\bibitem{Hau78} U.G. Haussmann, On the stochastic maximum principle, SIAM J. Control Optim. 16 (1978), 236-251.

%\bibitem{H86} U.G. Haussmann, Existence of optimal Markovian controls for degenerate diffusions, Lecture Notes in Control and Inform. Sci., 78 (1986),  171-186.

\bibitem{HL90} U.G. Haussmann, J.P. Lepeltier, On the existence of optimal control, SIAM J. Control Optim. 28 (1990), 851-902.

\bibitem{HS95} U.G. Haussmann, W. Suo, Singular optimal stochastic controls I: existence, SIAM J. Control Optim. 33 (1995), 916-936.

\bibitem{HS95b} U.G. Haussmann, W. Suo, Singular optimal stochastic controls II: Dynamic programming, SIAM J. Control Optim. 33 (1995), 937-959.

\bibitem{HS17} T. Hou, J. Shao, Heavy tail and light tail of Cox-Ingersoll-Ross processes with regime-switching,  to appear in Sci. China Math. 2019 (https://doi.org/10.1007/s11425-017-9392-5).

%\bibitem{IW} N. Ikeda, S. Watanabe, Stochastic differential equations and diffusion processes, 2nd ed. North-Holland Mathematical Library. 24, 1989.

%\bibitem{KR95} M. Kitaev, V.V. Rykov, \emph{Controlled queueing systems}, CRC Press, Boca Raton, FL. 1995.

%\bibitem{Ko81} M. Kohlmann, Survey on existence results in nonlinear optimal stochastic
%control of semimartingales. In Lecture notes in cont. and inform. sci.: Vol. 30
%(pp. 187¨C210). Berlin and New York: Springer-Verlag 1981.

%\bibitem{Ku65} H.J. Kushner, On the stochastic maximum principle: fixed time of control, J. Math. Anal. Appl. 11 (1965), 78-92.

\bibitem{Ku72} H.J. Kushner, Necessary conditions for continuous parameter stochastic optimization problems, SIAM J. Control Optim. 10 (1972), 550-565.

\bibitem{Ku} H.J. Kushner, Existence results for optimal stochastic controls, J. Optim. Theory Appl., 15 (1975), 347-359.


\bibitem{Ku14} H.J. Kushner, A partial history of the early development of continuous-time nonlinear stochastic systems theory,  Automatica J. IFAC 50 (2014), no. 2, 303-334.

\bibitem{Lin} A. Linquist, On feedback control of linear stochastic systems, SIAM J. Control 11 (1973), 323-343.

\bibitem{LWZ} Q. L\"u, T. Wang, X. Zhang, Characterization of optimal feedback for stochastic linear quadratic control problems, Probability, Uncertainty and Quantitative Risk, (2017), 2:11.

\bibitem{M13} X. Mao, Stabilization of continuous-time hybrid stochastic differential equations by discrete time feedback control, Automatica J. IFAC, 49 (2013), 3677-3681.

\bibitem{MY} X. Mao, C. Yuan,   \emph{Stochastic Differential Equations with Markovian Switching}, Imperial College Press, London, 2006.

\bibitem{Mey} P. Meyer, Probability and Potentials, Blaisdell Pub. Co, 1966.

%\bibitem{RY} D. Revuz, M. Yor,  {Continuous martingales and Brownian motion}, Springer-Verlag  Berlin  Heidelberg, 1999.

\bibitem{Peng} S. Peng, A general stochastic maximum principle for optimal control problems, SIAM J Control Optim. 28 (1990), 966-979.

%\bibitem{Ph} H. Pham, Continuous-time stochastic control and optimization with financial applications, Springer, Berlin, 2009.

\bibitem{SWY} J. Shao, F. Wang, C. Yuan, Harnack inequalities for stochastic (functional) differential equations with non-Lipschitzian coefficients, Electron. J. Probab. 17 (2012), no. 100, 1-18.

\bibitem{Sh15a} J. Shao, Criteria for transience and recurrence of regime-switching diffusion processes, Electron. J. Probab. {20} (2015), no. 63, 1-15.

\bibitem{Sh15b} J. Shao,  Ergodicity of regime-switching diffusions in Wasserstein distances, Stoch. Proc. Appl. 125 (2015), 739-758.

\bibitem{Sh15c} J. Shao, Strong solutions and strong Feller properties for regime-switching diffusion processes in an infinite state space, SIAM J. Control Optim. 53 (2015), 2462-2479.

\bibitem{Sh18} J. Shao, Invariant measures and Euler-Maruyama's approximations of state-dependent regime-switching diffusions. SIAM J. Control Optim. 56 (2018), no. 5, 3215-3238.

\bibitem{SX} J. Shao, F. Xi,  Stability and recurrence of regime-switching diffusion processes, SIAM J. Control Optim.  52  (2014), 3496-3516.

\bibitem{Sh19a} J. Shao, K. Zhao, The existence of optimal control for continuous-time Markov decision processes in random environments, arXiv:1906.08423v1, 2019.

\bibitem{Sk89} A. Skorokhod, \emph{Asymptotic Methods in the Theory of Stochastic
Differential Equations}, American Mathematical Society, Providence,
RI. 1989.

\bibitem{SSZ} Q. Song, R. Stockbridge, C. Zhu, On optimal harvesting problems in random environments, SIAM J. Control Optim. 49 (2011), 859-889.

\bibitem{SZ16} Q. Song, C. Zhu, On singular control problems with state constraints and regime-switching: a viscosity solution approach, Automatica J. IFAC 70 (2016), 66-73.

\bibitem{SV} D.W. Stroock, S.R.S. Varadhan, \emph{Multidimensional diffusion processes}, Springer-Verlag, New York, 1979.

\bibitem{Vi} C. Villani, \emph{Optimal transport, old and new}, Grundlehren der mathematischen
Wissenschaften, vol. 338, Springer Berlin Heidelberg, 2009.

\bibitem{ZSM} X. Zhang, T. Siu, Q. Meng, Portfolio selection in the enlarged Markovian regime-switching market. SIAM J. Control Optim. 48 (2009/10),  3368-3388.

\bibitem{ZZ} H. Zhang, X. Zhang, Second-order necessary conditions for stochastic optimal control problems, SIAM Riew, 60 (2018), 139-178.

\bibitem{XZ} F. Xi, C. Zhu, On Feller and strong Feller properties and exponential ergodicity of regime-switching jump diffusion processes with countable regimes. SIAM J. Control Optim. 55 (2017),   1789-1818.

\bibitem{ZY03} G. Yin, X.Y. Zhou, Markowitz's mean-variance portfolio selection with regime-switching: A continuous-time model, SIAM J. Control Optim., 42 (2003), 1466-1482.

\bibitem{YZ} G. Yin, C. Zhu, \emph{Hybrid switching diffusions: properties and applications}, Vol. 63, Stochastic Modeling and Applied Probability, Springer, New York. 2010.

\end{thebibliography}
\end{document}